\newcommand*\sg[1]{\{ #1 \}}
\newcommand{\Z}{\ensuremath{\mathbb{Z}}}
\newcommand{\F}{\ensuremath{{\mathcal{F}}}}
\newcommand{\Cay}{\mathrm{Cay}}
\renewcommand{\epsilon}{\varepsilon}
\newcommand{\Xlift}{\ensuremath{X^{\nwarrow }}}
\newcommand{\Xrightlift}{\ensuremath{X^{\nearrow}}}
\newtheorem{theorem}{Theorem}
\newtheorem*{theorem*}{Theorem}
\newtheorem{conjecture}{Conjecture}
\newtheorem{lemma}[theorem]{Lemma}
\newtheorem*{lemma*}{Lemma}
\crefname{coro}{Corollary}{Corollaries}
\newtheorem{proposition}[theorem]{Proposition}
\crefname{proposition}{Proposition}{Propositions}
\crefname{prop}{Property}{Properties}
\theoremstyle{definition}
\newtheorem{definition}{Definition}
\theoremstyle{remark}
\newtheorem{remark}{Remark}
\crefname{remark}{Remark}{Remarks}
\crefname{ex}{Example}{Examples}
\title{Period-rigidity of one-relator groups}
\author{Solène J. Esnay\footnote{supported by ANR project IZES (ANR-22-CE40-0011)}}
\affil{\href{mailto:solene.esnay@univ-amu.fr}{solene.esnay@univ-amu.fr}\\
I2M, Aix-Marseille Université, Marseille, France}
\author{Ugo Giocanti\footnote{supported by the National Science Center of Poland
under grant 2022/47/B/ST6/02837 within the OPUS 24 program, and partially supported by the French ANR Project TWIN-WIDTH
(ANR-21-CE48-0014-01).}}
\affil{\href{mailto:ugo.giocanti@uj.edu.pl}{ugo.giocanti@uj.edu.pl}\\{Theoretical Computer Science Department, Faculty of Mathematics and Computer Science, Jagiellonian University, Krak\'ow,
 Poland}\\
}
\author{Etienne Moutot}
\affil{\href{mailto:etienne.moutot@math.cnrs.fr}{etienne.moutot@math.cnrs.fr}\\
CNRS, I2M, Aix-Marseille Université, Marseille, France}
\date{\today}
\begin{document}

\maketitle

\begin{abstract}
We follow in this paper a recent line of work, consisting in characterizing the \emph{periodically rigid} finitely generated groups, i.e., the groups for which every subshift of finite type which is weakly aperiodic is also strongly aperiodic. In particular, we show that every finitely generated group admitting a presentation with one reduced relator and at least $3$ generators is periodically rigid if and only if it is either virtually cyclic or torsion-free virtually $\mathbb Z^2$. This proves a special case of a recent conjecture of Bitar (2024). We moreover prove that period rigidity is preserved under taking subgroups of finite indices. Using a recent theorem of MacManus (2023), we derive from our results that Bitar's conjecture holds in groups whose Cayley graphs are quasi-isometric to planar graphs.      
\end{abstract}

\section{Introduction}
    Symbolic dynamics were  initially introduced by Morse and Hedlund to understand general dynamical systems \cite{MH38}, by studying their \emph{symbolic} version, which are set of (bi-)infinite words under the action of the shift. A central notion in this field is the one of \emph{subshift}: a subshift is defined as the set of all (bi-)infinite words avoiding a given set of forbidden patterns. If this set of forbidden patterns is finite, then we say that its associated subshift is a  \emph{subshift of finite type} (or a SFT for short). 
	All these notions admit natural generalizations to higher dimensions. Namely, one can define multidimensional subshifts as colorings of $\Z^d$ avoiding a given set of forbidden patterns.
	This line of research allowed to establish many connections between symbolic dynamics, complexity theory and computability theory \cite{Aubrun_Sablik_2013,Hochman_2008, Hochman_Meyerovitch_2010, Jeandel_Moutot_Vanier_2020}.
	In particular, many problems that admit well-known solutions in the one-dimensional world become undecidable
	when transposed to higher dimensions. In this context, computability theory turned out to provide relevant tools to understand their complexity.
	One can mention for example problems related to periodicity, which lead to highly non-trivial questions when working in $\Z^2$. One of the first results going in this direction is
	the existence of aperiodic tilesets, discovered by Berger in 1966 \cite{Berger}. In dynamical terms, it means that there exists non-empty SFTs for $\Z^2$ containing only aperiodic configurations, which are objects that cannot exist in $\Z$. 
	In fact, when working in dimension $2$ or more, one can introduce several a priori non-equivalent definitions of periodicity. A configuration is called \emph{weakly periodic} if it is invariant by translation of at least one vector, and \emph{strongly periodic} if it consists of a coloring of a finite portion of the space repeated in every possible direction. In this paper, we are interested in the periodic/aperiodic behaviour of a subshift: we say that a subshift with no strongly periodic configuration  is \emph{weakly aperiodic}, and that a subshift with no weakly periodic configuration is \emph{strongly aperiodic}.
	It is well known that the two-dimensional case plays a special role, as weakly and strongly aperiodic SFTs of $\mathbb Z^2$ turn out to be equivalent notions: any weakly periodic configuration from an SFT can be re-arranged into a strongly periodic one, belonging to the same SFT.
	However, this property does not hold anymore in dimension 3 and higher. 
    Note that all the aforementioned definitions generalize in a natural way in finitely generated groups (see Section \ref{sec:def} for more precise definitions).

	As illustrated by the cases of $\Z$, $\Z^2$ and $\Z^3$ above, the periodic behaviour of the SFTs can be very different according to the group one considers.
	A natural question is whether we can have a better understanding of the relationship between the geometrical and structural properties of a group and the properties of its aperiodic SFTs.
	Several conjectures have been formulated aiming to answer to this question.
	
	\begin{conjecture}
	\label{conj: strongly}
		A finitely generated group $G$ admits a strongly aperiodic SFT if and only if $G$ is one-ended and has decidable word problem.
	\end{conjecture}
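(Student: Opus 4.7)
The conjecture splits into two implications that demand very different techniques, and I would attack them separately.

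For the necessity direction, assume $G$ admits a strongly aperiodic SFT $X$. To prove $G$ is one-ended, I would appeal to results extending Piantadosi's argument for free groups (due to Cohen) showing that groups with more than one end cannot admit strongly aperiodic SFTs: given any configuration, one can ``cut and paste'' across a finite separator between two ends to produce a new configuration in the same SFT with a non-trivial stabilizer, contradicting strong aperiodicity. For the decidability of the word problem, I would rely on Jeandel's observation that the combinatorial data of a strongly aperiodic SFT (together with a computable forbidden-pattern list) encodes enough to distinguish trivial from non-trivial group elements, yielding a decision procedure.

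For the sufficiency direction, the plan is to construct a strongly aperiodic SFT for every one-ended finitely generated group $G$ with decidable word problem. The strategy would mirror the hierarchical self-similar constructions of Berger for $\Z^2$, and of Cohen--Goodman-Strauss--Rieck for hyperbolic groups: design an SFT in which every configuration encodes an infinite nested hierarchy of ``levels,'' each level certifying that no translation by a group element of bounded length can preserve the configuration. The decidability of the word problem would be used to locally simulate Turing machines verifying these certificates, converting the algorithmic tractability of $G$ into SFT-level rigidity.

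The main obstacle lies squarely in this sufficiency direction. Existing constructions rely on a canonical geometric scale: $\Z^d$ admits canonical $2^n$-sized boxes, hyperbolic groups admit quasi-geodesic hierarchies, and so on. For an arbitrary one-ended group the shape of balls of radius $r$ can be wild, and setting up a consistent substitution-like hierarchy is not at all straightforward. A reasonable staged approach would attack intermediate classes first (amenable one-ended groups, groups of polynomial or intermediate growth, groups with a controlled isoperimetric profile), before closing the general case, which likely requires a fundamentally new tool exploiting the combination of one-endedness and decidable word problem directly, rather than passing through geometric regularity.
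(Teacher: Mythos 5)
This statement is a \emph{conjecture}, not a theorem of the paper: the authors explicitly state that ``the general case remains widely open,'' and they offer no proof of it --- they only cite the two partial results you also invoke (Cohen's theorem that a group with at least two ends admits no strongly aperiodic SFT, and Jeandel's theorem that a recursively presented group with a strongly aperiodic SFT has decidable word problem). So there is no paper proof to compare against, and your proposal is not a proof either. Your necessity direction is essentially the known half, though even there a caveat is in order: Jeandel's result assumes the group is recursively presented, and your ``cut and paste across a finite separator'' gloss of Cohen's argument understates its difficulty (the original proof had a gap later patched by Salo and Genevois, as the paper notes).

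The genuine gap is the sufficiency direction, and you acknowledge it yourself: you describe a \emph{strategy} (hierarchical constructions \`a la Berger and Cohen--Goodman-Strauss--Rieck, Turing-machine simulation using decidability of the word problem) together with the reason it does not currently work (no canonical geometric scale on an arbitrary one-ended group), and you conclude that a ``fundamentally new tool'' is needed. That is an accurate description of the state of the art, but it means the proposal establishes nothing beyond what is already known for special classes (virtually polycyclic, solvable Baumslag--Solitar, hyperbolic, lamplighter groups, etc.). As submitted, this should be presented as a survey of evidence for the conjecture, not as a proof attempt.
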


	Although it is hard to trace the first explicit mention of it in the litterature, \cref{conj: strongly} naturally arises in the light of two known results: if $G$ admits a strongly aperiodic SFT and is recursively presented, then it has a decidable word problem \cite{jeandel2015aperiodicsubshiftsfinitetype}; and on the other hand, any group with at least two ends does not admit a strongly aperiodic SFT \cite{Cohen}.
	Though the general case remains widely open, \cref{conj: strongly} has been shown to be true in several classes of groups, including in particular: virtually polycyclic groups \cite{jeandel2016aperiodicsubshiftspolycyclicgroups}, solvable Baumslag Solitar groups \cite{ME22}, hyperbolic groups \cite{Cohen-GS-Rieck}, lamplighter groups \cite{bartholdi2024shiftslamplightergroup}.
	In the same vein, Carroll and Penland conjectured that every group which is not virtually cyclic should admit a weakly aperiodic SFT.
	\begin{conjecture}[Carroll, Penland \cite{CP15}]
	\label{conj: CP}
		A finitely generated group $G$  admits a weakly aperiodic SFT if and only if it is not virtually cyclic.
	\end{conjecture}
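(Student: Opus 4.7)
The plan is to handle the two directions of the conjecture separately, splitting the harder backward direction into cases according to the number of ends of $G$.

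For the forward direction, I would argue contrapositively. Assume $G$ is virtually cyclic and let $H \leq G$ be a finite-index subgroup isomorphic to $\mathbb{Z}$ (the finite case being vacuous). Choose a transversal $T$ for $H$ in $G$. Given a non-empty SFT $X \subseteq A^G$ with forbidden patterns of bounded diameter in a fixed generating set, the re-encoding $\tilde X \subseteq (A^T)^H$ obtained by grouping coordinates along cosets is a non-empty $H$-SFT. Any non-empty SFT on $\mathbb{Z}$ contains a strongly periodic configuration, by a standard pigeonhole argument on its finite-graph presentation as a sofic shift. Decoding yields a configuration of $X$ fixed by a non-trivial element of $H$, hence weakly periodic in $G$; so $X$ cannot be weakly aperiodic.

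For the backward direction, suppose $G$ is not virtually cyclic. By Stallings' theorem on ends, $G$ has $0$, $1$, $2$, or infinitely many ends; since finite and two-ended groups are virtually cyclic, we may assume $G$ is one-ended or has infinitely many ends. In the infinitely-ended case, Stallings' structure theorem expresses a finite-index subgroup of $G$ as a non-trivial amalgam or HNN extension over a finite group, and I would exploit the resulting Bass--Serre tree action to encode a weakly aperiodic SFT, building on existing constructions for virtually free groups and then inducing up to $G$.

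The main obstacle is the one-ended case, in which no uniform strategy is known. Here I would first attempt a reduction: if $G$ has a finite-index subgroup $H$ for which the conjecture is already established --- for example if $H$ is virtually polycyclic, hyperbolic, a solvable Baumslag--Solitar group, or a lamplighter group \cite{jeandel2016aperiodicsubshiftspolycyclicgroups,Cohen-GS-Rieck,ME22,bartholdi2024shiftslamplightergroup} --- then a weakly aperiodic SFT on $H$ can be induced up to $G$ via the block re-encoding along a transversal. The residual one-ended class, with no such structural handle, is where I expect the main difficulty to lie: one would hope to extract from the mere non-virtual-cyclicity of $G$ a quasi-isometrically embedded two-dimensional structure inside some Cayley graph of $G$ on which a Wang-tile-like aperiodic system can be implemented. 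Producing such a structure from purely group-theoretic hypotheses appears to be the deep geometric difficulty behind the conjecture, and is precisely what keeps it open in full generality.
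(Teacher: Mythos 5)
This statement is \Cref{conj: CP}, which the paper explicitly presents as an \emph{open conjecture} due to Carroll and Penland --- the paper offers no proof of it, and only records that it is known in certain classes of groups. Your proposal, to its credit, is honest about this: what you have written is not a proof but a programme, and the gap you flag yourself --- the general one-ended case --- is precisely why the statement remains a conjecture. So the verdict is that there is a genuine, unavoidable gap: no argument currently exists that extracts a weakly aperiodic SFT from the bare hypothesis that a one-ended group is not virtually cyclic, and your hope of finding a quasi-isometrically embedded two-dimensional structure in an arbitrary such group is not something you can currently deliver.

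On the parts you do sketch: the forward direction is essentially correct and standard (any non-empty SFT on a virtually cyclic group admits a strongly periodic configuration, via the higher block recoding to a $\mathbb Z$-SFT and pigeonhole on the associated finite graph; this recoding is the construction from \cite{CP15} that the paper contrasts with its right extension). For the infinitely-ended case, your Bass--Serre route is more machinery than needed: a group with infinitely many ends contains a free subgroup of rank $2$ (this is how \Cref{lemma:infiniteends-aperiodic} proceeds, citing \cite{AMO07}), and the free extension $\Xlift$ of Piantadosi's weakly aperiodic SFT on $\mathbb F_2$ is weakly aperiodic on $G$ by \Cref{th:jeandel} --- no amalgam decomposition or induction over a finite-index subgroup is required. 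You also omit the known non-amenable case \cite{JeandelTranslationlike}, which covers a large swath of one-ended groups. But none of this repairs the core issue: the residual one-ended amenable-or-otherwise-unstructured case is open, and your proposal does not close it.
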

	Here again, \cref{conj: CP} has been proved in many classes of groups, including all the aforementioned ones for which \cref{conj: strongly} holds, together with the class of non-amenable groups \cite{JeandelTranslationlike}.
	
	A similar conjecture has been stated for the decidability of the domino problem, asking if a given set of Wang tiles tiles the group or not, and is know to be true roughly for the same classes of groups.
	\begin{conjecture}[Ballier, Stein \cite{Ballier_Stein_2013}]
	\label{conj: DP}
		A finitely generated group $G$ has undecidable domino problem if and only if it is not virtually free.
	\end{conjecture}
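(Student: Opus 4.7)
The statement splits into two implications, of which only the forward direction has a complete proof in the literature. For the virtually free $\Rightarrow$ decidable direction, the plan is to exploit Muller--Schupp-style structure: pick a finite-index free subgroup $F \leq G$ with coset representatives $g_1,\dots,g_n$; any Wang tileset $\tau$ on $G$ can be lifted to a tileset $\tilde\tau$ on $F$ (whose alphabet is $\tau \times \{1,\dots,n\}$ and whose adjacency rules simulate the $g_i$-shift inside $G$) such that $\tau$ tiles $G$ iff $\tilde\tau$ tiles $F$. The Cayley graph of $F$ is a regular tree, on which tileability of a finite Wang set is equivalent to non-emptiness of a finite top-down tree automaton; this is decidable, even in polynomial time.

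The converse direction (not virtually free $\Rightarrow$ undecidable) is the genuinely open part. My plan would be to reduce from the $\Z^2$ domino problem via a \emph{translation-like} action of $\Z^2$ on $G$: given such an action, any Wang tileset on $\Z^2$ can be re-encoded into one on $G$ whose tileability is equivalent, so that undecidability transfers along the reduction. Seward's theorem already provides a translation-like action of $\Z$ on every infinite finitely generated group, so the real task is to upgrade this to $\Z^2$ under the hypothesis that $G$ is not virtually free.

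This upgrade is the main obstacle. I would attempt a case split on the number of ends of $G$. If $G$ has infinitely many ends, Stallings' decomposition expresses $G$ as a fundamental group of a graph of groups with simpler vertex groups, and ``not virtually free'' should propagate to at least one factor, enabling an induction along the accessibility filtration. If $G$ is one-ended and not virtually cyclic, the difficulty is genuine: one needs to extract a coarse $\Z^2$-skeleton directly from the geometry of $G$. Known classes (hyperbolic, polycyclic, Baumslag--Solitar, lamplighter) are handled by ad hoc constructions, but a unified argument would plausibly require a new geometric input, perhaps through asymptotic dimension or through a quasi-isometric embedding theorem in the spirit of MacManus. For the sub-problem restricted to one-relator groups, the Magnus--Karrass--Solitar hierarchy together with the structural results developed later in the present paper already provide a rich description of the Cayley geometry, which I would exploit first as the most tractable intermediate target before attacking the general case.
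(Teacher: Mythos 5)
The statement you are addressing is stated in the paper as a \emph{conjecture} (due to Ballier and Stein), not as a theorem: the paper offers no proof of it, explicitly notes that it is only known for roughly the same classes of groups as Conjectures~\ref{conj: strongly} and~\ref{conj: CP}, and never returns to it. So there is no proof in the paper to compare yours against, and your proposal --- as you yourself acknowledge --- does not constitute a proof either. The direction ``virtually free $\Rightarrow$ decidable domino problem'' is indeed known and your sketch of it is essentially correct (commensurability invariance of the domino problem reduces to a free group, where tileability is decided by tree-automaton emptiness), but that is the easy, established half.

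The genuine gap is the converse. Your proposed route --- reduce from the $\Z^2$ domino problem via a translation-like action of $\Z^2$ on $G$ --- does not close it, for two reasons. First, the existence of a translation-like action of $\Z^2$ on every finitely generated group that is not virtually free (equivalently, on every one-ended group that is not virtually cyclic, after the ends case split) is itself an open problem; Seward's theorem only provides actions of $\Z$, and no general upgrade to $\Z^2$ is known. Second, even where such an action exists, transferring undecidability requires the action to be effective in a suitable sense, which is an additional hypothesis, not a free consequence. Your case analysis on the number of ends and your appeal to accessibility for the infinitely-ended case are reasonable research directions, but the one-ended case remains exactly the open heart of the conjecture, and nothing in the present paper (whose results concern period rigidity, not the domino problem) supplies the missing geometric input. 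In short: the statement is open, and the proposal is a plan of attack rather than a proof.
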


	We refer to Bitar's PhD thesis \cite{BitarPhD} for a more complete list of the state of the art on these conjectures.

	In a recent work \cite{Bitar24}, Bitar initiated a systematic study of the \emph{rigidity} of the periodic configurations, aiming at characterizing in a similar way groups which are \emph{periodically rigid}, i.e., in which every weakly aperiodic SFT is also strongly aperiodic. Such groups are called \emph{periodically rigid}. An example of such a group is $\mathbb Z^2$.
	Interestingly, the existence of weakly or strongly aperiodic SFT for a group is a commensurability invariant, but not its period rigidity (see \cref{rem:lift-not-strongly} below for examples of groups admitting an SFT which is weakly but not strongly aperiodic, commensurable to some periodically rigid groups).
	
	In the vein of the previous two conjectures about periodicity, Nicolas Bit\'ar conjectured the following:
	\begin{conjecture}[Bitar \cite{Bitar24}]
	 \label{conj: Bitar}
	 Let $G$ be a finitely generated group. Then $G$ is periodically-rigid if and only if it is either virtually cyclic or torsion-free virtually $\mathbb Z^2$.
	\end{conjecture}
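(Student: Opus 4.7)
The plan is to split the statement into its two implications. For the "if" direction, virtually cyclic and torsion-free virtually $\mathbb{Z}^2$ groups must both be shown periodically rigid. Virtually cyclic groups reduce to $\mathbb{Z}$, where the one-dimensional nature of the Cayley graph collapses weak and strong aperiodicity: any configuration invariant under one nontrivial translation is already strongly periodic. For torsion-free virtually $\mathbb{Z}^2$, the plan is to exploit the classical $\mathbb{Z}^2$ theorem that every weakly periodic configuration in an SFT can be replaced by a strongly periodic one, and to lift this statement to $G$ through its finite-index $\mathbb{Z}^2$ subgroup $H$. The lift is not automatic because rigidity is not a commensurability invariant (as the paper already notes), so torsion-freeness must be used precisely to avoid finite stabilizers that would corrupt the lift: one restricts an alleged weakly aperiodic SFT on $G$ to $H$, finds a strongly $H$-periodic configuration, and shows this extends to a strongly $G$-periodic one by combining its $H$-period lattice with the action of the finite quotient $G/H$.

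The hard direction is the converse: for every $G$ outside these two classes, exhibit a weakly aperiodic SFT carrying a weakly periodic configuration. I would proceed by a structural case analysis. First, if $G$ has more than one end and is not virtually cyclic, Stallings' decomposition yields a splitting over a finite subgroup, on top of which one can design an SFT that is genuinely aperiodic in one factor and constant along the other, producing the required configuration. Second, if $G$ is one-ended and contains $\mathbb{Z}^3$ as a subgroup, one lifts a Kari-type weakly-but-not-strongly aperiodic SFT on $\mathbb{Z}^3$ along the embedded subgroup, using translation-like action arguments in the spirit of Jeandel and Seward to extend the construction coherently over all cosets. Third, if $G$ is virtually $\mathbb{Z}^2$ with torsion, a nontrivial finite-order element provides an unavoidable periodicity that, once combined with an aperiodic planar SFT lifted from the $\mathbb{Z}^2$ subgroup, is precisely what prevents strong aperiodicity.

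The main obstacle lies in the residual one-ended case: groups that are neither virtually $\mathbb{Z}^2$ nor contain $\mathbb{Z}^3$. This residue includes most non-elementary hyperbolic groups, many one-relator groups with torsion, branch groups such as Grigorchuk's, and numerous solvable groups of Hirsch length $2$ that are not virtually abelian. No uniform SFT construction is known to cover all of these; even the existence of weakly-but-not-strongly aperiodic SFTs for arbitrary one-ended hyperbolic groups is open. A plausible strategy is to combine Seward's translation-like $\mathbb{Z}^2$-actions, which carry two-dimensional SFT behaviour into the group, with a transverse translation-like $\mathbb{Z}$-action supplying the forced periodicity, and to merge this with the hyperbolic SFT machinery of Cohen--Goodman-Strauss--Rieck. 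The principal difficulty is promoting the effective subshifts produced by such methods to genuine SFTs while preserving the required periodicity profile. This is precisely the obstacle that forces the present paper to restrict to one-relator groups with at least three generators and to groups quasi-isometric to planar graphs, and resolving it in full generality remains well beyond current techniques.
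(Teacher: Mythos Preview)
The statement you are attempting to prove is \emph{Conjecture}~\ref{conj: Bitar}, and the paper does not prove it: it is stated as an open conjecture, and the paper's contribution is to verify it only in two restricted classes (one-relator groups with at least three generators, Theorem~\ref{thm: one-relator}, and quasi-planar groups, Corollary~\ref{cor: QP}). There is therefore no ``paper's own proof'' to compare against.

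Your proposal is not a proof either, and you say so yourself in the final paragraph. The ``if'' direction you sketch is indeed known and the paper simply cites it (see the first line of the proof of Corollary~\ref{cor: QP}, referring to \cite[Prop.~6.4]{Bitar24}). For the converse, your case analysis leaves the decisive residual class---one-ended groups that are neither virtually $\mathbb Z^2$ nor contain $\mathbb Z^3$---completely unresolved, and you explicitly acknowledge that handling it ``remains well beyond current techniques.'' That is not a gap in an otherwise sound argument; it is the entire content of the conjecture. Several of the earlier cases are also only gestured at: for instance, in the $\mathbb Z^3$-subgroup case you assert that a lifted Kari-type SFT will fail to be strongly aperiodic on $G$, but the free extension need not inherit a nontrivial period unless something like the hypothesis of Theorem~\ref{th:barbieri} is verified, which you do not do.

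In short: the paper does not claim a proof of this statement, and your proposal is a research outline rather than a proof. If your intent was to survey possible attack routes, that is reasonable, but it should not be presented as a proof of the conjecture.
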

	\noindent
	In the same paper, the author provides several constructions allowing to transfer period rigidity of a group to some other related groups, and proves that Conjecture \ref{conj: Bitar} holds for virtually nilpotent and polycyclic groups.

	In this paper, we show that Conjecture \ref{conj: Bitar}  also holds for groups admitting a presentation with one relator and at least three generators.
	
    \begin{theorem}
	 \label{thm: one-relator} 
	 Let $G$ be a group that admits a presentation $\langle S\mid r \rangle$ such that $|S|\geq 3$ and $r$ is a cyclically reduced non-empty word. Then $G$ is not periodically rigid.
	\end{theorem}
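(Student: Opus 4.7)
The strategy is to construct, for every such $G$, a weakly aperiodic SFT on $G$ that admits a non-trivial weakly periodic configuration. The starting point is the abelianization: since $|S|=n\geq 3$ and $G$ is defined by a single relator, $G^{\mathrm{ab}}=\mathbb Z^n/\langle \bar r\rangle$ has free rank at least $n-1\geq 2$, so there is a surjective homomorphism $\phi:G\twoheadrightarrow \mathbb Z^2$. Moreover, a deficiency argument shows that $G$ is non-abelian (the given presentation has deficiency $\geq n-1\geq 2$, whereas every finitely generated abelian group has deficiency $\leq 1$), and since every subgroup of $\mathbb Z^2$ is abelian, the kernel $K:=\ker\phi$ is non-trivial.

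The plan is then to pull back a strongly aperiodic Wang shift $Y\subseteq A^{\mathbb Z^2}$ (e.g.\ Berger's or Kari--Culik's) through $\phi$. Concretely I define $X\subseteq A^G$ by finitely many forbidden local patterns: for each generator $s\in S$ and each pair $(a,b)\in A^2$ that is not realizable as the values at some positions $p$ and $p+\phi(s)$ in a configuration of $Y$, forbid the pattern $c(g)=a$, $c(gs)=b$. By construction every pullback $c_0\circ\phi$ with $c_0\in Y$ lies in $X$ and is $K$-invariant, so $X$ is non-empty and contains non-trivial weakly periodic configurations: their stabilizer contains $K$ but has infinite index in $G$ since $G/K\cong\mathbb Z^2$. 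Hence $X$ is not strongly aperiodic.

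The main obstacle is to verify that $X$ is \emph{weakly} aperiodic. Configurations in $X$ need not be literal pullbacks from $\mathbb Z^2$: the local rules only constrain pairs along Cayley-graph edges and do not enforce $K$-invariance globally, a problem that becomes acute when $K$ is not finitely generated. A direct approach would assume $c\in X$ has finite-index stabilizer $H$, replace $H$ by its normal core, and use the induced coloring of the finite quotient $G/H$ together with the local Berger rules to extract (via a compactness/pigeonhole argument on $\phi$-fibres modulo $H\cap K$) a strongly periodic configuration of $Y$ on $\mathbb Z^2$, contradicting the strong aperiodicity of $Y$. A \emph{fallback} approach instead passes first to a finite-index subgroup $G_0\leq G$ on which $\phi|_{G_0}$ has a finitely generated kernel --- invoking Bieri--Strebel theory and the BNS invariant of one-relator groups --- so that a literal pullback SFT on $G_0$ is manifestly weakly aperiodic (any strongly periodic configuration there would push down to a strongly periodic element of $Y$), and then the paper's preservation of periodic rigidity under finite-index subgroups transfers non-rigidity back to $G$. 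As a warm-up one can first dispose of the easier sub-case where $r$ omits some generator: then $G$ decomposes as a non-trivial free product with a free factor, has at least two ends, and by Cohen's theorem admits no strongly aperiodic SFT at all, so any weakly aperiodic SFT on $G$ (e.g.\ one induced from a free subgroup of rank $\geq 2$) suffices.
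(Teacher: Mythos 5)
Your reduction to a surjection $\phi\colon G\twoheadrightarrow\Z^2$ and the non-strong-aperiodicity of the pullback SFT are fine (as is the warm-up where $r$ omits a generator), but the central step --- weak aperiodicity of $X$ --- is exactly where the proposal has a genuine gap, and neither of your two routes closes it. The local rules you impose only constrain the pair $(c(g),c(gs))$ along each Cayley edge; they do not force $c$ to be constant on cosets of $K=\ker\phi$, so a configuration of $X$ need not project to any configuration of $Y$ on $\Z^2$. Your ``direct approach'' (normal core plus a compactness/pigeonhole argument on fibres) is not yet an argument: from a finite-orbit $c\in X$ there is no evident way to manufacture an element of $Y$, since the values of $c$ on a fibre $\phi^{-1}(p)$ are unconstrained as a set, and a set-theoretic section of $\phi$ does not intertwine the generators of $G$ with translations of $\Z^2$. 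Worse, it is unclear that $X$ is weakly aperiodic at all: $X$ is merely the set of labelings of the Cayley graph whose edges carry ``compatible'' pairs, some of these edge constraints may even be vacuous (a pair of tiles can co-occur at offset $\phi(s)$ in some configuration of $Y$ without ever tiling periodically), and such constraint-satisfaction subshifts frequently admit strongly periodic points even when $Y$ does not.

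The fallback is not salvageable in the generality the theorem requires. For a closed surface group of genus at least $2$ --- which is among the groups covered by the statement and is highlighted in the paper as a key application --- every non-trivial normal subgroup of infinite index is an infinite-rank free group, hence infinitely generated; the BNS invariant is empty, and this persists in every finite-index subgroup (which is again a surface group). So there is no finite-index $G_0$ on which $\ker(\phi|_{G_0})$ becomes finitely generated, and the ``literal pullback'' SFT cannot be cut out by finitely many patterns. For contrast, the paper goes the opposite way: instead of a $\Z^2$ quotient it uses a free subgroup. After a Magnus--Moldavansky-type rewriting of the presentation so that some generator $t$ occurs in the relator with exponent sum $0$ (or else $G$ has infinitely many ends and one concludes by Cohen's theorem), Magnus's Freiheitssatz yields a free subgroup $H$ of rank $|S|-1\ge 2$ generated by the remaining generators; the free extension of Piantadosi's weakly aperiodic SFT on $H$ is weakly aperiodic on $G$, and it fails to be strongly aperiodic because the exponent-sum homomorphism $|\cdot|_t\colon G\to\Z$ vanishes on $H$ but not on any conjugate of $t^n$, which triggers Barbieri's criterion. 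If you wish to keep the quotient-based strategy you would need a genuinely new argument for weak aperiodicity; as written the proof does not go through.
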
	
	
	The class of one relator groups 
	has been introduced and studied in an early work of Magnus \cite{Freiheitssatz}, and takes a central place in geometric group theory.
    To our knowledge, Conjecture \ref{conj: Bitar} remains open for groups admitting a finite presentation with one relator and two generators, which includes in particular the class of Baumslag-Solitar groups, for which the question has been solved \cite{ME22}.
    
    A consequence of Theorem \ref{thm: one-relator} is that 
    Conjecture \ref{conj: Bitar} holds for surface groups (i.e, fundamental groups of closed orientable surfaces), namely every surface group of genus at least $2$ admits a weakly but not strongly aperiodic SFT. According to \cite{Cohen-GS-Rieck},  
    Gromov~\cite[Theorem 3.4.C]{Gromov} provided a construction of SFTs which are weakly but not strongly aperiodic in any finitely generated hyperbolic group, expanded upon by Coornaert and Papadopoulos~\cite[Chapter 3, Theorem 8.1]{Coornaert-Papadopoulos}, although without explicitly mentioning the notion of aperiodicity. Our construction thus gives another example of such SFTs in surface groups.
    
    Recently, MacManus \cite[Corollary D]{Mac23} proved a structure theorem for \emph{quasi-planar groups}, that is, finitely generated groups having one (and thus all) of their Cayley graph being quasi-isometric to some planar graph. From a geometric perspective, this class of groups is of special interest 
    as it
    generalizes the class of \emph{planar 
    groups} which has been widely studied over the last two centuries (see for example \cite{Droms, GH22, Maschke, Zieschang80}), and more generally the class of the groups that admit some Cayley graph excluding some  countable graph as a minor, which has been specifically investigated more recently \cite{EG24}. Using Theorem \ref{thm: one-relator} together with MacManus' result, we prove that Conjecture \ref{conj: Bitar} also holds for quasi-planar groups. 
    
	\begin{restatable}{coro}{QP}
    \label{cor: QP}
    Let $G$ be a quasi-planar group. Then $G$ is periodically rigid if and only if it is either virtually cyclic or torsion-free virtually $\mathbb Z^2$.
    \end{restatable}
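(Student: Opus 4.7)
The plan is to combine \cref{thm: one-relator} with MacManus's structure theorem \cite[Corollary~D]{Mac23} and the finite-index inheritance of period rigidity proved earlier in this paper, reducing the corollary to cases already treated in the literature. For the direction ``virtually cyclic or torsion-free virtually~$\mathbb{Z}^2$ implies periodically rigid,'' virtually cyclic groups admit no weakly aperiodic SFT (the easy half of \cref{conj: CP}) and are therefore vacuously periodically rigid; torsion-free virtually~$\mathbb{Z}^2$ groups are virtually polycyclic and are handled directly by Bitar's theorem \cite{Bitar24}.

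For the converse, let $G$ be a quasi-planar group that is neither virtually cyclic nor torsion-free virtually~$\mathbb{Z}^2$. I would invoke \cite[Corollary~D]{Mac23} to assert that $G$ admits a finite-index subgroup $H$ belonging to one of three families: (i) a non-abelian free group, (ii) a closed orientable surface group $\pi_1(\Sigma_g)$ with $g\geq 2$, or (iii) a virtually~$\mathbb{Z}^2$ group with nontrivial torsion. In case (i), $H$ is non-amenable and hence admits a weakly aperiodic SFT by \cite{JeandelTranslationlike}, while having infinitely many ends forbids any strongly aperiodic SFT by \cite{Cohen}, so $H$ is not periodically rigid. In case (ii), the surface group $\pi_1(\Sigma_g)$ has the one-relator presentation $\langle a_1,b_1,\ldots,a_g,b_g\mid [a_1,b_1]\cdots[a_g,b_g]\rangle$ on $2g\geq 4$ generators with a cyclically reduced nonempty relator, so \cref{thm: one-relator} applies verbatim. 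In case (iii), $H$ is virtually polycyclic and lies outside Bitar's rigid class, so his theorem again yields non-rigidity. In each case, the finite-index inheritance of period rigidity transfers the failure from $H$ to $G$.

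The main obstacle will be the bookkeeping required by \cite[Corollary~D]{Mac23}: one has to verify that the finite-index structure it gives to $G$ genuinely lands in one of the three families above, and that the degenerate subcases (finite groups, virtually~$\mathbb{Z}$, torsion-free Euclidean crystallographic groups) are exactly those absorbed by the ``virtually cyclic'' or ``torsion-free virtually~$\mathbb{Z}^2$'' classes of the statement. Once this case analysis is pinned down, the proof reduces to an application of \cref{thm: one-relator} to surface groups combined with off-the-shelf facts about free and polycyclic groups.
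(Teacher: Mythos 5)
Your overall strategy coincides with the paper's: MacManus's structure theorem, \cref{thm: one-relator} applied to surface groups of genus at least $2$, transfer of non-rigidity from a finite-index subgroup to the ambient group via \cref{lemma:heredity}, and the combination of a weakly aperiodic SFT (via \cref{th:piantadosi} and \cref{th:jeandel}, or via non-amenability) with \cref{th:cohen} for groups with infinitely many ends, Bitar's polycyclic result covering the virtually $\Z^2$ situations. The forward direction is handled identically in both arguments.

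The genuine gap is in your trichotomy. \cref{th:macmanus} produces a finite-index subgroup $H$ that is a \emph{free product} $G_1 * \cdots * G_m$ of finitely many free groups and surface groups, and such an $H$ need not lie in any of your three families: for instance the free product of $\Z$ with a surface group of genus $2$, or $\Z^2 * \Z^2$, is neither free, nor a surface group, nor virtually $\Z^2$. This is not the ``bookkeeping'' you defer --- the reduction you hope to verify is simply false as stated, and the case $m\geq 2$ with at least one non-free factor needs its own (short) argument. The paper supplies it: a free product of $m\geq 2$ nontrivial torsion-free factors has infinitely many ends and contains $\mathbb F_2$, so the same weakly-but-not-strongly-aperiodic construction as in your case (i) applies; adding this fourth case closes your proof. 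A smaller imprecision: the subgroup $H$ given by MacManus can never itself be ``virtually $\Z^2$ with torsion'' --- if such a free product is virtually $\Z^2$ it must be $\Z^2$ itself, hence torsion-free --- so in your case (iii) the torsion to be exploited lives in $G$, not in $H$; the correct formulation is that $G$ is virtually $\Z^2$ but not torsion-free, hence virtually polycyclic and non-rigid by Bitar's theorem applied to $G$ directly. With that rephrasing your case (iii) is sound, and it actually treats this subcase more carefully than the paper's own proof, which asserts rigidity outright when the single factor is $\Z^2$ without separating the torsion-free and torsion cases.
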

	
    In particular, our proof of Corollary \ref{cor: QP} relies on a construction of a SFT that we introduce, which implies that period rigidity is preserved when taking finite index subgroups.

    \begin{proposition}[see Lemma \ref{lemma:heredity} below]
    \label{prop: finite-index}
        Let $G,H$ be two finitely generated groups such that $H$ is a finite index subgroup of $G$. If $G$ is periodically rigid, then so is $H$. 
    \end{proposition}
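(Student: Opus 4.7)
My plan is to prove the contrapositive: if $H$ is not periodically rigid, then neither is $G$. So assume $X \subseteq A^H$ is an SFT on $H$ that is weakly aperiodic (no strongly periodic configuration) but not strongly aperiodic (contains some weakly periodic configuration $x$). I will build an SFT $Y \subseteq A^G$ on $G$ with the same two properties.

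Construction of $Y$. Let $F \subseteq A^D$ with $D \subseteq H$ finite be a forbidden pattern set defining $X$. Define $Y$ to be the $G$-SFT on alphabet $A$ with the \emph{same} forbidden patterns $F$, now interpreted in $A^G$ via the inclusion $D \subseteq H \subseteq G$. The key structural observation is that since $D \subseteq H$, every $G$-shift of a forbidden pattern lies inside a single right $H$-coset. Fixing a right transversal $T$ with $G = \bigsqcup_{t \in T} Ht$, this yields the equivalent description
\[
y \in Y \iff \tilde y_t := \bigl(h \mapsto y(ht)\bigr) \in X \quad \text{for every } t \in T.
\]

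Verification. I would then check three points. First, $Y$ is nonempty: for any $x \in X$, setting $y(ht) := x(h)$ on the unique decomposition $g = ht$ makes every slice $\tilde y_t$ equal to $x$, so $y \in Y$. Second, $Y$ contains a weakly periodic configuration: with the weakly periodic $x \in X$, pick $h_0 \in \mathrm{Stab}_H(x) \setminus \{e\}$ and take this $y$. For any $g = ht$ one has $h_0^{-1}g = (h_0^{-1}h)t$, so $y(h_0^{-1}g) = x(h_0^{-1}h) = x(h) = y(g)$, whence $h_0 \in \mathrm{Stab}_G(y)$. Third, $Y$ is weakly aperiodic: for any $y \in Y$ the restriction $\tilde y_e = y|_H$ lies in $X$, and the inclusion $\mathrm{Stab}_G(y) \cap H \subseteq \mathrm{Stab}_H(\tilde y_e)$ holds directly (a period $h \in H$ of $y$ is in particular a period of the $H$-restriction). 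If $y$ were strongly periodic in $G$, intersecting its finite-index stabilizer with $H$ would produce a finite-index subgroup of $H$ stabilizing $\tilde y_e$, contradicting the weak aperiodicity of $X$.

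Main obstacle. The one delicate point is the $G$-shift-invariance of $Y$ when $H$ is not normal: a naive encoding via alphabet $A^T$ (one $X$-configuration per coset) would force one to handle the cocycle of the $G$-action on right cosets, entangling the construction with combinatorics of double cosets. The simplification exploited here is precisely that $D \subseteq H$ keeps every forbidden pattern inside a single right coset under $G$-shifts, so no cocycle appears, $Y$ is automatically $G$-shift-invariant, and the transfer of both periodicity types reduces to the single inclusion $\mathrm{Stab}_G(y) \cap H \subseteq \mathrm{Stab}_H(y|_H)$ together with the elementary commensurability fact that $[G : H] < \infty$.
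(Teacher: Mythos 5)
There is a genuine gap, and it sits exactly where the paper has to work hardest. The SFT $Y$ you define by reinterpreting the forbidden patterns of $X$ (with supports $D\subseteq H$) inside $A^G$ is precisely the paper's \emph{free extension} $\Xlift$ of Definition~\ref{def:lift}. With the paper's left shift action $(g\cdot y)_h=y_{g^{-1}h}$, an occurrence of a pattern supported on $D\subseteq H$ occupies a subset of a \emph{left} coset $gH$, not of a right coset $Ht$; the correct description is $y\in Y \iff y|_{gH}\in X$ for all $g\in G$. Your claimed equivalence ``$y\in Y$ iff every right-coset slice $\tilde y_t$ lies in $X$'' is false when $H$ is not normal (and the right-coset-slice condition is not even invariant under the left shift, so it cannot define a subshift for this action). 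This breaks your second verification: the configuration $y(ht):=x(h)$ built along a right transversal need not belong to $Y$, since its restriction to a left coset $gH$ reads off $x$ along the $H$-parts of the right-coset decompositions of the points $gh'$, which is not a shift of $x$ and need not avoid $\F$. Conversely, if you build $y$ correctly along left cosets ($y(th):=x(h)$ for a left transversal), then left multiplication by $h_0$ permutes the left cosets and introduces a $t$-dependent correction $h'$ defined by $h_0^{-1}t=t'h'$, so $h_0\in\mathrm{Stab}_H(x)$ no longer yields $h_0\in\mathrm{Stab}_G(y)$. This is exactly the cocycle you dismiss in your last paragraph; it does not disappear. Your points on nonemptiness (fixable via left cosets) and on weak aperiodicity (essentially Proposition~\ref{th:jeandel} plus the finite-index stabilizer intersection) are fine; the missing piece is transferring the weakly periodic configuration.

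This is precisely why the paper introduces the \emph{right extension} $\Xrightlift$ rather than using $\Xlift$. It enlarges the alphabet to $A\times[k]$, forces the $[k]$-layer to be constant on each right coset $Hg_i$ via the patterns of type~(\ref{forbid-monochrome}), and, crucially, conjugates the supports of the forbidden patterns to $P_i=g_i^{-1}Pg_i$ so that the rules of $X$ propagate along each right coset in the directions $g_i^{-1}hg_i$. Since left multiplication by $h_0\in H$ preserves each right coset $Hg_i$ and acts there as $H$ acts on itself, the configuration $y(hg_i):=(x(h),i)$ does lie in $\Xrightlift$ and inherits the period $h_0$, while the $i$-coloring argument of Remark~\ref{rem: pi2} recovers weak aperiodicity. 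To repair your proof you would need to reproduce this (or an equivalent) mechanism; the plain free extension with alphabet $A$ does not suffice.
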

    
    The paper is organized as follow. In \cref{sec:def}, we provide useful definitions and results. 
    \Cref{sec:subgroup} contains a proof of Proposition \ref{prop: finite-index}, and Section \ref{sec:one-relator} contains a proof of Theorem \ref{thm: one-relator} and Corollary \ref{cor: QP}.

\section{Preliminaries}
\label{sec:def}
\subsection{Group presentations, Cayley graphs and subshifts of finite type}

A \emph{presentation} of a group $G$ is a pair $\langle S \mid R \rangle$, where $S$ is a generating subset of $G$, and $R$ is a set of finite words using symbols from $S \sqcup S^{-1}$ called \emph{relators}, such that every element of $G$ can be written as a word of $S \cup S^{-1}$, and such that if two words correspond to the same group element, then one can obtain the second from the first after performing a sequence of deletions and additions of words $ss^{-1}$, $s^{-1}s$ for $s\in S$, and of words $r \in R$ as factors.
In other words, if $\F(S)$ denotes the free group with generating set $S$, then $G$ is isomorphic to $\F(S)/N(R)$, where $N(R)$ denotes the normal subgroup of $\F(S)$ generated by elements of the form $wrw^{-1}$, for $w\in (S\sqcup S^{-1})^{\ast}$ and $r\in R$. 
If $\langle S \mid R \rangle$ is a presentation of $G$, then we will write for short $G=\langle S \mid R \rangle$.

For every group $G$, every set $S$ of generators of $G$ and every $g\in G$, we let $|g|_S\in \mathbb N$ denote the minimum size of a word $w\in (S\cup S^{-1})^{\ast}$ corresponding to $g$. 

A \emph{Tietze transformation} is one of the following operations, which  allows to transform a presentation $\langle S \mid R \rangle$ of a group $G$ into another presentation of $G$.
\begin{itemize}
 \item \emph{Adding a relation to $R$}: we add a relator $r\in (S\uplus S^{-1})^\ast$ that is already representing the identity element of $G$;
 \item \emph{Removing a relation to $R$}: we remove a redundant relation $r$ from $R$, i.e., a relation $r\in R$ such that $r$ also corresponds to the identity element of the group described by the presentation $\langle S\mid R\setminus\sg{r}\rangle$;
 \item \emph{Adding a generator to $S$}: we add a new letter $s$ to $S$ disjoint from $S\uplus S^{-1}$ together with a relation $sw$ for some word $w\in (S\uplus S^{-1})^\ast$;
 \item \emph{Removing a generator from $S$}: if some relation $r$ can be written $r=sr'$, where neither $s$ nor $s^{-1}$ occur in $r'$, we remove $r$ from $R$ and $s$ from $S$, and replace every occurence of $s$ (respectively of $s^{-1}$) in the other relations by $(r')^{-1}$ (respectively by $r'$).
\end{itemize}

Given two groups $G=\langle S \mid R \rangle$ and $H=\langle T \mid U \rangle$ such that $S\cup S^{-1}$ and $T\cup T^{-1}$ are disjoint, their \emph{free product} is the group $G * H := \langle S, T \mid R,U \rangle$. A group $G$ is \emph{virtually} $\mathcal P$, for a given property $\mathcal{P}$ if it has a subgroup of finite index satisfying $\mathcal{P}$.

A group $G$ is said to have \emph{torsion} if it admits at least one nontrivial element with finite order.

The \emph{surface group $G_g$ of genus $g$} for $g\geq 0$ is the fundamental group of the closed orientable surface of genus $g$. For each $g\geq 1$, the surface group of genus $g$ can alternatively be defined by the finite presentation 

\[\langle a_1, b_1, \ldots, a_g, b_g \mid [a_1, b_1] \ldots [a_g,b_g] \rangle\]

with $[a_i,b_i] = a_i b_i {a_i}^{-1} {b_i}^{-1}$ the commutator of two elements.

\medskip

Given a finite generating set $S$ of a finitely generated group $G$, the associated (right) \emph{Cayley graph} of $G$ is the (non-oriented) graph $\Cay(G,S)$ whose vertices are the elements of $G$ and with one edge $\sg{g, gs}$, for every $g\in G$ and $s\in S\cup S^{-1}$.

We equip every graph with its shortest path metric. Given two metric spaces $(X,d_X)$ and $(Y,d_Y)$, we say that $X$ is
\emph{quasi-isometric} to $Y$ if there is a map $f: X \rightarrow Y$
and constants $\varepsilon\ge 0$, $\lambda\ge 1$, and $C\ge 0$ such that
(i) for any $y\in Y$ there is $x\in X$ such that $d_Y(y,f(x))\le C$,
and (ii) for every $x_1,x_2\in X$, $$\frac1{\lambda}d_X(x_1,x_2)-\varepsilon\le d_Y(f(x_1),f(x_2))\le \lambda d_X(x_1,x_2)+\varepsilon.$$
It is well known that any two Cayley graphs of a same finitely generated group are quasi-isometric to each other.

A \emph{ray} in a graph $G$ is an infinite simple one-way path $P=(v_1,v_2,\ldots)$. A \emph{subray} $P'$ of $P$ is a ray of the form $P'=(v_i,v_{i+1},\ldots)$ for some $i\geq 1$. We say that a ray \emph{lives} in a set $X\subseteq V(G)$ if one of its subrays is included in $X$.
We define an equivalence relation $\sim$ over the set of rays $\mathcal R(G)$ by letting $P\sim P'$ if and only if for every finite set of vertices $S\subseteq V(G)$, $P$ and $P'$ are living in the same component of $G- S$. The \emph{ends} of $G$ are the elements of classes $\mathcal R(G)/\sim$.
It is not hard to check that the number of ends of a graph is invariant under taking quasi-isometries. In particular, we will talk about the number of ends of a group $\Gamma$, as it does not depend on the choice of its Cayley graph. Freudenthal \cite{Freudenthal44} proved that for every finitely generated group, its number of ends lies in $\sg{0,1,2,\infty}$. 
Note that the $0$-ended groups correspond to the finite groups, while it is well-known (see for example \cite[Proposition 9.23]{Drutu}) that a group has $2$ ends if and only if it is virtually-$\mathbb Z$. 

\medskip

Let $G$ be a finitely generated group with neutral element $1_G$. Let $A$ be a finite set called an \emph{alphabet}.

We say that $A^G$ is the \emph{full shift} with alphabet $A$ on $G$. For a given \emph{configuration} $x \in A^G$, we may use equivalently the notations $x(g)$ and $x_g$ for $g \in G$.
The full shift can be endowed with the natural left action of $G$, called the \emph{shift}: for $g,h \in G$ and $x \in A^G$, we set $(g \cdot x)_h := x_{g^{-1}h}$.
When additionally endowed with the prodiscrete topology (product topology of the discrete topology on $A$), $A^G$ forms a dynamical system.

A \emph{pattern} $p$ is an element of some $A^{P_p}$ where $P_p \subseteq G$ is a finite subset of $G$ called the \emph{support} of $p$.
We say that a pattern $p \in A^{P_p}$ \emph{appears} in a configuration $x\in A^{G}$ -- or that $x$ \emph{contains} $p$ -- if there exists $g \in G$ such that for every $h \in P_p$, $(g \cdot x)_{h} = p_{h}$. In this case, we write $p\sqsubset x$.

The \emph{subshift} associated to a set of patterns $\F$, called set of \emph{forbidden patterns}, is defined by
\[
X_\F = \{ x \in A^{G} \mid \forall p \in \mathcal{F}, p \not\sqsubset x \}
\]
that is, $X_\F$ is the set of all configurations that do not contain any pattern from~$\F$. Note that there can be several sets of forbidden patterns defining the same subshift $X$.
A subshift can equivalently be defined as a closed set under both the topology and the $G$-action. It inherits this topology and this action and can be seen as a dynamical system.
If $X=X_\F$ with $\F$ finite, then $X$ is called a \emph{Subshift of Finite Type}, SFT for short.

Note that we use the left $G$-action in all this paper, because we represent groups using a right Cayley graph, meaning that subshifts can be seen as sets of colorings of a given Cayley graph, and the left action is the one that preserves patterns under a shift.

\subsection{Aperiodicity of subshifts of finite type}
	
	Let $X$ be a subshift on a group $G$ and $x \in X$.
    The \emph{orbit} of $x$ is ${Orb_G(x) = \{g\cdot x \mid g\in G\}}$ and its \emph{stabilizer} $Stab_G(x) = \{ g\in G \mid g\cdot x = x \}$. We say that $x$ is a \emph{strongly periodic configuration} if $|Orb_G(x)| < +\infty$, and that $x$ is a \emph{weakly periodic configuration} if $Stab_G(x) \neq \{e\}$.
    
    If no configuration in $X$ is strongly periodic and the subshift is nonempty, we say that the subshift is \emph{weakly aperiodic}.
    If no configuration in $X$ is weakly periodic and the subshift is nonempty, we say that the subshift is \emph{strongly aperiodic}.
    To avoid confusion, we will always use aperiodicity for subshifts and periodicity for configurations in this paper.
    
    We say that a group is \emph{periodically rigid}, a notion introduced in \cite{Bitar24} if it admits no SFT $X$ that is weakly but not strongly aperiodic. Said otherwise, the two notions of aperiodicity above are equivalent in that group.
	
	We end this section with a few known results about aperiodicity in SFTs.
	The first of them is the main result of the PhD of Piantadosi, which states that nontrivial free groups are not periodically rigid. In this paper, several proofs of weak aperiodicity on other group structures will be obtained by showing they contain a free subgroup, and inheriting the weak aperiodicity of a SFT on the free group through a construction called a free extension (see \cref{def:lift}).

	\begin{theorem}[\cite{Piantadosi}, Theorems 2.2 and 3.5]
	\label{th:piantadosi}
	Any free group of rank at least 2 is not periodically rigid.
	\end{theorem}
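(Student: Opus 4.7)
The goal is to exhibit, for every $n \geq 2$, an SFT on $F_n$ which is weakly aperiodic but admits a weakly periodic configuration. Let $S = \{a_1, \ldots, a_n\}$ be a free generating set and $A = S \cup S^{-1}$, and interpret a label $x_g \in A$ as an arrow at $g$ pointing to the neighbour $g \cdot x_g$ in $\Cay(F_n, S)$. I define $X_n$ by forbidding every size-$2$ pattern on an edge $\{g, gs\}$ such that both $x_g = s$ and $x_{gs} = s^{-1}$ hold (two opposite arrows along the edge) or neither holds (no arrow along the edge). This finite, radius-$1$ set of forbidden patterns defines an SFT in which each edge of the Cayley tree carries exactly one orientation, so every $g$ has out-degree $1$ and in-degree $2n-1$ in the arrow-digraph.

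The first key step is to show that every $x \in X_n$ has infinite orbit. Starting from any vertex and following its unique out-arrow yields an infinite ray in $\Cay(F_n, S)$, hence a well-defined end $\omega(g) \in \partial F_n$. For two vertices $v, w$, consider the unique path $v = p_0, \ldots, p_k = w$ in the tree: its orientation sequence on consecutive edges cannot contain the pattern ``$\leftarrow \to$'' (which would force $p_{i+1}$ to have two out-arrows), so all forward-oriented edges of the path precede all backward ones. The arrow-trajectories from $v$ and from $w$ therefore meet at the unique ``switch vertex'' $p_j$ and agree thereafter, giving $\omega(v) = \omega(w) =: \omega(x)$. The stabiliser of $x$ must fix $\omega(x)$; but in the free group $F_n$ the pointwise stabiliser of any boundary end is at most infinite cyclic, and for $n \geq 2$ every cyclic subgroup of $F_n$ has infinite index. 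So $x$ has infinite orbit, and $X_n$ is weakly aperiodic.

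To conclude I exhibit a weakly periodic configuration in $X_n$: define $x_g$ to be the label of the first edge of the unique geodesic ray from $g$ to the end $a_1^\infty \in \partial F_n$. For each edge of the Cayley tree exactly one endpoint is closer to $a_1^\infty$, so the induced orientations satisfy the local rule and $x \in X_n$. Since left multiplication by $a_1$ is a label-preserving tree isometry fixing $a_1^\infty$, the geodesic ray from $a_1^{-1} g$ to $a_1^\infty$ is the translate of the one from $g$, so $x_{a_1^{-1} g} = x_g$ and $a_1 \cdot x = x$. Hence $a_1 \in \mathrm{Stab}_{F_n}(x) \setminus \{e\}$, so $x$ is weakly periodic and $X_n$ is not strongly aperiodic, which proves the theorem.

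The principal obstacle is the ``single common end'' step, which fails on non-tree Cayley graphs and is specific to the tree structure of $\Cay(F_n, S)$: its proof depends on the exclusion of the ``$\leftarrow \to$'' pattern along any geodesic, which is automatic here but not in graphs with cycles. The remaining ingredients — checking the local rule for the explicit ``point-towards-$a_1^\infty$'' configuration and the standard fact that boundary-end stabilisers in free groups are cyclic — are routine.
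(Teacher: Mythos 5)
Your proof is correct. Note that the paper does not actually prove this statement: it imports it as a black box from Piantadosi (Theorems 2.2 and 3.5 of the cited reference), so there is no in-paper argument to compare against. Your construction --- label each vertex of $\Cay(F_n,S)$ by an outgoing direction, forbid edges carrying zero or two arrows, deduce that every configuration determines a single end of the tree via the ``no $\leftarrow\,\to$ along a geodesic'' observation, and then use that end-stabilisers in $F_n$ are at most cyclic hence of infinite index --- is essentially the classical flow-to-an-end argument that underlies Piantadosi's example, and the explicit ``point towards $a_1^\infty$'' configuration correctly witnesses failure of strong aperiodicity via the period $a_1$. All the steps you flag as routine really are routine: the trajectory following out-arrows is non-backtracking (backtracking would put two opposite arrows on one edge), hence a geodesic ray in the tree; the equivariance $\omega(h\cdot x)=h\cdot\omega(x)$ gives that $\mathrm{Stab}(x)$ fixes $\omega(x)$; and a subgroup of $F_n$ fixing an end is trivial or infinite cyclic, hence of infinite index for $n\geq 2$. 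This matches the paper's definitions of weak and strong aperiodicity (nonemptiness is supplied by your explicit configuration), so the SFT is weakly but not strongly aperiodic and $F_n$ is not periodically rigid.
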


	The second is a result of Cohen and Goodman-Strauss who proved that surface groups admit a strongly aperiodic SFT by encoding orbit graphs of substitutions on their Cayley graph. 

	\begin{theorem}[\cite{CGS}, Theorem 2.1]
	\label{th:cgs}
	Surface groups admit a strongly aperiodic SFT.
	\end{theorem}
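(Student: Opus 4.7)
The plan is to realize the genus-$g$ surface group $G_g$ (for $g\geq 2$) as a cocompact Fuchsian lattice in $\mathrm{Isom}(\mathbb H^2)$, and to transfer a hierarchical, substitution-based aperiodic tiling of the hyperbolic plane onto the Cayley graph. With the standard presentation $\langle a_1,b_1,\ldots,a_g,b_g \mid \prod_i [a_i,b_i]\rangle$, the single relator forces a tiling of $\mathbb H^2$ by translates of a regular hyperbolic $4g$-gon, so $\Cay(G_g,S)$ embeds quasi-isometrically (indeed dually) inside $\mathbb H^2$. This identification lets us speak of the tile of $\mathbb H^2$ containing each group element.

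First I would fix a primitive substitution $\sigma$ on a finite set of decorated prototiles of $\mathbb H^2$, subdividing each prototile into smaller copies of prototiles, and chosen to be \emph{recognizable}, so every admissible tiling carries a unique hierarchy of supertiles at every level. The local information that witnesses this hierarchy — which level-one tile contains a given point, the discrete position inside it, and which edges of the orbit graph (parent/child links between levels) are incident — can be checked between neighbouring tiles. I would then define the SFT alphabet so that each $g\in G_g$ is labelled by exactly this local data, and list as forbidden patterns all local configurations violating subdivision consistency, boundary matching, or the existence of a parent supertile. Non-emptiness is obtained by reading off the decoration of any genuine substitution tiling of $\mathbb H^2$.

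The main obstacle is establishing strong aperiodicity. Suppose some $x\in X$ is fixed by $h\in G_g\setminus\{1\}$. Recognizability propagates: the hierarchy encoded by $x$ must be preserved by $h$, so $h$, viewed as a hyperbolic isometry, permutes the supertiles of every level $n$. But each hyperbolic element has a fixed translation length $\tau(h)>0$, whereas supertile diameters grow without bound; a single large supertile must then be mapped to another supertile intersecting it nontrivially yet displaced by $\tau(h)$, contradicting the fact that distinct level-$n$ supertiles are disjoint once $n$ is large enough. Hence $\mathrm{Stab}_{G_g}(x)=\{1\}$ for every $x\in X$, giving strong aperiodicity. The delicate point — and where I would expect the technical work to concentrate — is choosing a substitution whose recognizability is compatible with the concrete tiling by $4g$-gons, so that the local rules really are finite and the correspondence between tilings and SFT configurations is bijective up to the decoration.
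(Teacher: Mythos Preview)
This theorem is not proved in the paper itself; it is quoted from Cohen and Goodman-Strauss \cite{CGS}, with only the one-line remark that their construction proceeds ``by encoding orbit graphs of substitutions on their Cayley graph.'' Your sketch follows exactly that philosophy --- a recognizable hierarchical substitution tiling of $\mathbb H^2$ pushed to an SFT on $G_g$ --- so at the level of strategy you are aligned with the cited source, and there is nothing further in the present paper to compare against.

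Your aperiodicity paragraph, however, is not yet a proof. You assert that a large supertile $T$ is sent by $h$ to \emph{another} supertile that still meets $T$, contradicting disjointness of distinct level-$n$ supertiles. Two steps are missing. First, $\tau(h)$ is only the \emph{minimum} displacement of a hyperbolic isometry; points far from the axis of $h$ are moved arbitrarily far, so the overlap $h(T)\cap T\neq\varnothing$ is only guaranteed once you choose $T$ to straddle the axis. Second, and more seriously, nothing you wrote rules out $h(T)=T$: overlap together with disjointness of distinct supertiles yields precisely $h(T)=T$, not a contradiction. The contradiction has to come from a further step you omitted --- either the observation that a hyperbolic isometry cannot stabilize any bounded subset of $\mathbb H^2$ (iterating $h$ on a point of $T$ escapes every bounded set), or, in the combinatorial version used in \cite{CGS}, that each supertile carries a distinguished root tile which $h$ would then have to fix. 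Either argument closes the gap, but as written your sketch stops one move short.
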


	Finally, the third known result we will need is due to Cohen (\cite{Cohen}, patched by Salo and Genevois \cite{Genevois_Salo_2019}, which will allow us in what follows to focus on one-ended groups.
 
	\begin{theorem}[\cite{Cohen}, Theorem 1.5]
	\label{th:cohen}
	A group with infinitely many ends cannot have a strongly aperiodic SFT.
	\end{theorem}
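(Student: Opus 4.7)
My plan is to derive this from Stallings' theorem: every finitely generated group $G$ with infinitely many ends splits non-trivially as an amalgamated product $A *_C B$ or as an HNN extension $A *_C$ with $C$ a finite subgroup. This gives an action of $G$ on a simplicial Bass--Serre tree $T$ with finite edge stabilizers and no global fixed point; since $G$ has more than two ends, the action is non-elementary, so $T$ admits a bi-infinite geodesic axis on which some element $h \in G$ of infinite order acts by translation.

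The key geometric point is that each edge of $T$ corresponds in $\Cay(G,S)$ to a finite cut $gC$ separating the Cayley graph into two infinite pieces, and all such cuts are $G$-translates of $C$. Given a non-empty SFT $X$ with forbidden patterns of radius $r$ and any $x \in X$, I would associate to each edge $g \cdot e_0$ lying on the axis of $h$ its \emph{type}, namely the restriction of $g^{-1}\cdot x$ to the $r$-neighborhood of $C$ in $\Cay(G,S)$. Since the alphabet is finite and the $r$-neighborhood of $C$ is finite, there are only finitely many types, so along the $h$-axis one can find, by pigeonhole, two indices $i<j$ at which the types match and such that $h^{j-i}$ translates one cut onto the other. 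I would then build $y \in X$ by restricting $x$ to the ``slab'' between these two cuts and extending this slab $\langle h^{j-i}\rangle$-periodically along the axis, gluing along the matched $r$-neighborhoods. The matching condition guarantees no forbidden pattern of $\F$ is created across the seams, so $y \in X$, and by construction $h^{j-i} \in Stab_G(y)\setminus\{1_G\}$, making $y$ weakly periodic and contradicting the strong aperiodicity of $X$.

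The main technical obstacle is to make the slab construction \emph{globally} well-defined: one has to fill in not only the slab between the two chosen cuts but also the two infinite ``wings'' of $\Cay(G,S)$ sitting on either side of the $h$-axis, in a way that is consistent across every other cut of $T$ and equivariant under $h^{j-i}$. This amounts to producing a legal coloring of each branch of the Bass--Serre tree out of pieces of $x$, which is precisely where Cohen's original argument was incomplete; the Salo--Genevois patch addresses it by viewing each subtree hanging off the axis as a piece of an SFT on a smaller coset and invoking a compactness argument over the tree of finite pattern-types to extend the periodic core to a full configuration in $X$.
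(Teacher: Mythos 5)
The paper does not prove this statement: it is imported verbatim from Cohen's work (as patched by Salo and Genevois), so there is no internal argument to compare yours against. Your sketch does follow the broad strategy of the proof in the literature --- Stallings' splitting, a hyperbolic element $h$ acting on the Bass--Serre tree, finitely many ``types'' of decorated cuts along its axis, pigeonhole, and a cut-and-paste producing a configuration fixed by a power of $h$.

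As a proof, however, the proposal has a genuine gap, and you essentially name it yourself. The $\langle h^{j-i}\rangle$-translates of the slab cover only the part of $\Cay(G,S)$ lying between the two ends determined by the axis; since $G$ has infinitely many ends, each cut $h^{n(j-i)}K$ leaves further infinite components hanging off the axis, and the candidate configuration $y$ must be defined on all of them, legally, consistently with the slab near each attachment, and equivariantly under $h^{j-i}$. Declaring this ``the main technical obstacle'' and deferring it to ``the Salo--Genevois patch'' does not prove the step: it is exactly the point at which Cohen's original argument was found to be incomplete, so it is the one step that cannot be waved through. There are also smaller imprecisions: the cut associated with an edge of the Bass--Serre tree is a finite separating set in the Cayley graph, not literally the coset $gC$; and one must verify that matching the $r$-neighborhoods of two cuts precludes forbidden patterns straddling a seam even where those neighborhoods meet the hanging branches. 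If the hard step is to be outsourced to Cohen and Salo--Genevois anyway, the honest course is to cite the theorem itself, which is what the paper does.
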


\section{Heredity of aperiodicity of subgroups}
\label{sec:subgroup}

    Given a finitely generated group $H$ together with an SFT $X$ of $H$, a natural question is the following: if $H$ is a subgroup of a finitely generated group $G$, how can we extend $X$ to an SFT of $G$ sharing the same properties of aperiodicity?
    In this section, we present two different constructions of SFTs that can be considered as such extensions. The first one is called the \emph{free extension} of $G$ and is well-known and natural to define; however it does not preserves strong aperiodicity. Intuitively, it is based on forcing the rules of $X$ on each \emph{left} $H$-coset.
    
    We call the second construction the \emph{right extension} of $G$. In general, the right extension is only a subshift, which not necessarily of finite type, and it only produces an SFT when $H$ is a subgroup of $G$ finite index. We will restrict to that case, as we focus on the aperiodicity of SFTs. We will show that this extension then preserves both weak and strong aperiodicity. In some way, it can be thought as the best possible attempt to propagate the rules of the SFT $X$ on $H$ to an SFT on $G$, using this time the \emph{right} $H$-cosets.

\subsection{Free extensions}
\label{sec: lift}
	
	\begin{definition}[Free extension]
		\label{def:lift}
		Let $G,H$ be finitely generated groups such that $H$ is a subgroup of $G$. Let $X$ be a subshift on $H$ with forbidden patterns $\F$ and alphabet $A$. As $H\subseteq G$, note that $\F$ also defines a set of forbidden patterns on $G$.
		
		The \emph{free extension} (or \emph{lift}) of $X$ in $G$ is the subshift $\Xlift$ on $G$ defined by the set of forbidden patterns $\F$, when considered as patterns in $G$.
        Said otherwise, we have
		\[
		\Xlift = \{ y \in A^G \mid \forall g \in G, y|_{gH} \in X\}.
		\]
	\end{definition}
    
    The previous equality is due to the fact that a configuration $y \in A^G$ is not in $\Xlift$ if and only if one can find $g \in G$ and some finite subset $P$ of $H$ so that $y|_{gP}$ is a forbidden pattern in $\F$ with support $P$. 
	
	Observe that if $X$ is an SFT, then $\Xlift$ is also an SFT, and that $\Xlift$ forces every left $H$-coset of $G$ to reproduce a configuration of $X$ (all of them being possibly distinct).
    It is not hard to check that free extensions preserve weak aperiodicity.
	
	\begin{proposition}[\cite{JeandelTranslationlike}, Proposition 1.1]
		\label{th:jeandel}
        Let $G,H$ be two finitely generated groups such that $H$ is a subgroup of $G$, and let $X$ be an SFT on $H$.
        If $X$ is weakly aperiodic, then $\Xlift$ is also weakly aperiodic.
	\end{proposition}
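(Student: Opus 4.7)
My plan is to handle the two parts of weak aperiodicity separately: non-emptiness of $\Xlift$ and the absence of strongly periodic configurations.

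For non-emptiness, I would pick a left transversal $T \subseteq G$ of $H$ in $G$, so that $G = \bigsqcup_{t \in T} tH$, and take any $x \in X$ (which exists since $X$ is weakly aperiodic, hence non-empty). Then I would define $y \in A^G$ by $y(th) := x(h)$ for every $t \in T$ and $h \in H$. To check that $y \in \Xlift$, I need to see that for every $g \in G$, the configuration $h \mapsto y(gh)$ lies in $X$. Writing $g = t h_0$ with $t \in T$ and $h_0 \in H$, the computation $y(gh) = y(th_0h) = x(h_0 h) = (h_0^{-1} \cdot x)(h)$ shows that this restriction coincides with the shift $h_0^{-1} \cdot x$, which lies in $X$ by shift-invariance.

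For the absence of strongly periodic configurations, I would argue by contradiction. Suppose some $y \in \Xlift$ is strongly periodic, and set $K := Stab_G(y)$, which has finite index in $G$. A standard counting argument (the map $h(H\cap K) \mapsto hK$ from $H/(H\cap K)$ into $G/K$ is well-defined and injective) yields $[H : H \cap K] \leq [G : K] < \infty$. Now define $x \in A^H$ by $x(h) := y(h)$; by the defining property of $\Xlift$ applied to $g = 1_G$, we have $x \in X$. For any $h \in H \cap K$, the identity $h \cdot y = y$ restricts to $h \cdot x = x$, so $H \cap K \subseteq Stab_H(x)$. Thus $Stab_H(x)$ has finite index in $H$, i.e. $x$ is strongly periodic in $X$, contradicting the weak aperiodicity of $X$.

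There is no real obstacle: the argument is essentially a bookkeeping exercise that relies on two ingredients, namely (i) the existence of a transversal to propagate a single configuration of $X$ to all left cosets independently, and (ii) the hereditary behaviour of finite index under intersection with a subgroup. The only points requiring some care are tracking the convention $(g \cdot x)_h = x_{g^{-1}h}$ when identifying $y|_{gH}$ with a configuration on $H$, and remembering that $H$ need not be normal in $G$ so that the finite-index claim must be phrased in terms of coset inclusion rather than quotient groups.
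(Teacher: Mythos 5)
Your proof is correct. The paper does not actually prove this proposition---it is quoted from \cite{JeandelTranslationlike} as a known fact---and your argument (spreading a single $x\in X$ over a left transversal to get non-emptiness, then intersecting a finite-index stabilizer $Stab_G(y)$ with $H$ to extract a strongly periodic configuration $y|_H\in X$) is exactly the standard one, with the coset-inclusion bookkeeping $[H:H\cap K]\leq [G:K]$ handled correctly.
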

	
	\begin{remark}
    \label{rem:lift-not-strongly}
    \cref{th:jeandel} does not hold for strong aperiodicity: using its notations, there might exist some strongly aperiodic SFT $X$ of $H$ such that $\Xlift$ is not a strongly aperiodic SFT of $G$. 
    For example, if $H$ is any finitely generated group that admits a strongly aperiodic SFT $X\subseteq A^H$ (take for example $H:=\mathbb Z^2$ \cite{Berger}), and if $G:=H\times (\mathbb Z/k\mathbb Z)$ for some $k\geq 2$, then it is not hard to see that $\Xlift\subseteq A^G$ is not strongly aperiodic anymore. Let $x\in X$ and define the configuration $x^{\uparrow}: G\to A$ by setting for each $(g, i)\in G$, $x^{\uparrow}(g,i):=x(g)$. Then by definition of $\Xlift$, we have $x^{\uparrow}\in \Xlift$, but for any $i\in (\mathbb Z/k\mathbb Z)\setminus\sg{0}$, the element $g:=(1_H, i)\in G\setminus \sg{1_G}$ is such that $g\cdot x^{\uparrow}=x^{\uparrow}$, implying that $\Xlift$ is not strongly aperiodic.
    In fact this example generalizes to any case where $H$ is a torsion-free subgroup of a group $G$ with torsion \cite[Proposition 6.9]{Bitar24}.
\end{remark}
	
	In \cite{Barbieri}, the author gives precise conditions on $G,H$ and $X$ for $\Xlift$ to be strongly aperiodic on $G$. In particular, the following result that depends purely on the structure of $G$ and $H$ will be useful.
	
	\begin{theorem}[Corollary of \cite{Barbieri}, Proposition 3]
		\label{th:barbieri}
		Let $H$ be a subgroup of $G$. Let $X$ be an SFT on $H$.
		
		If			
		\[
		\exists g \in G \setminus \{1\}, \forall \gamma \in G, \forall n > 0, \gamma g^n \gamma^{-1} \notin H \setminus \{1\},
		\]
		
		then $\Xlift$ is not strongly aperiodic on $G$.
	\end{theorem}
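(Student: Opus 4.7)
The plan is to exhibit a configuration $y \in \Xlift$ whose stabilizer contains $g$; this immediately shows that $\Xlift$ is not strongly aperiodic. If $X$ is empty the statement is vacuous, so assume $X \neq \emptyset$ and pick $g \in G \setminus \{1\}$ satisfying the hypothesis. The guiding intuition is that the free extension constrains only the \emph{individual} left cosets of $H$ in $G$, so we have full freedom to align the configurations on different cosets; the hypothesis on $g$ will guarantee that the orbits of $\langle g \rangle$ on $G/H$ behave well enough to allow a coherent, $g$-invariant alignment.

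The first key step is to show that $\langle g \rangle$ acts \emph{freely} on the set of left cosets $G/H$ by left multiplication $\gamma H \mapsto g \gamma H$. Indeed, if $g^n \gamma H = \gamma H$ with $n \neq 0$, then $\gamma^{-1} g^{n} \gamma \in H$; applying the hypothesis to $\gamma^{-1}$ in place of $\gamma$ (and to $n$ or $-n$, whichever is positive) yields $\gamma^{-1} g^{n} \gamma \notin H \setminus \{1\}$, forcing $\gamma^{-1} g^n \gamma = 1$ and hence $g^n = 1$. Thus every orbit $\mathcal{O}$ of the action has the form $\{g^n \gamma_{\mathcal{O}} H : n \in \mathbb Z\}$, with all listed cosets pairwise distinct modulo the order of $g$.

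Next, I would pick a system of representatives $(\gamma_{\mathcal{O}})_{\mathcal{O}}$ for these orbits and, for each $\mathcal{O}$, an arbitrary configuration $x_{\mathcal{O}} \in X$. I then define $y : G \to A$ by
\[
y(z) := x_{\mathcal{O}}(h) \quad \text{where } z = g^n \gamma_{\mathcal{O}} h \text{ with } h \in H,
\]
the orbit $\mathcal{O}$ being uniquely determined by the coset $zH$. Freeness of the action ensures this is well posed: if $z = g^n \gamma_{\mathcal{O}} h = g^{n'} \gamma_{\mathcal{O}} h'$, then $g^{n-n'} \gamma_{\mathcal{O}} H = \gamma_{\mathcal{O}} H$ gives $g^{n-n'} = 1$, and a direct computation then yields $h = h'$.

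Finally, I would check the two required properties. For any coset $g^n \gamma_{\mathcal{O}} H$, the restriction of $y$ to this coset, read as a configuration on $H$ via $h \mapsto y(g^n \gamma_{\mathcal{O}} h)$, coincides with $x_{\mathcal{O}} \in X$; hence no forbidden pattern of $X$ appears along any left coset, and $y \in \Xlift$. Moreover, for every $z = g^n \gamma_{\mathcal{O}} h$ we have $g z = g^{n+1} \gamma_{\mathcal{O}} h$, so $y(gz) = x_{\mathcal{O}}(h) = y(z)$, i.e.\ $g \cdot y = y$, and $g \in \mathrm{Stab}_G(y) \setminus \{1\}$ proves $\Xlift$ is not strongly aperiodic. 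I expect the only real obstacle to be the well-definedness of $y$, which is entirely carried by the freeness lemma in the previous paragraph.
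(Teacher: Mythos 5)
Your proof is correct. Note that the paper does not actually prove this statement: it is imported wholesale as a corollary of Proposition~3 of \cite{Barbieri}, which gives a general analysis of when a free extension is strongly aperiodic. Your argument is therefore a genuinely different (and self-contained) route: you isolate exactly what the conjugacy hypothesis buys, namely that $\langle g\rangle$ acts freely on the left coset space $G/H$, and you then build a $g$-invariant configuration by choosing one configuration $x_{\mathcal O}\in X$ per $\langle g\rangle$-orbit $\mathcal O$ of cosets and copying it identically onto every coset of that orbit via $y(g^n\gamma_{\mathcal O}h):=x_{\mathcal O}(h)$. All the steps check out: the freeness computation correctly handles both signs of $n$ (for $n<0$ one applies the hypothesis to $-n$ and inverts), well-definedness of $y$ follows from freeness, each coset of $G/H$ reads off an $H$-translate of some $x_{\mathcal O}$ so $y\in\Xlift$, and $y(gz)=y(z)$ for all $z$ gives $g\cdot y=y$ with $g\neq 1$; the degenerate case $X=\emptyset$ is correctly dispatched since the empty subshift is not strongly aperiodic under the paper's convention. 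What your approach buys is transparency and independence from \cite{Barbieri}; what the citation buys the authors is brevity and access to Barbieri's sharper ``if and only if'' conditions, which are not needed here.
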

	
\subsection{Right extension}
\label{sec: finite-index}
	\begin{definition}[Right extension]
		\label{def:rightlift}
		Let $H$ be a finite index subgroup of $G$, with a finite set of generators $S_H$.
		Let $X$ be a subshift on $H$, and $\F_H$ denote its associated set of forbidden patterns.
		Let $g_1,\ldots, g_k\in G$ be representatives of the right cosets $H\cdot g_1, \ldots, H\cdot g_k$ of $H$ with $k:=[G:H]\in \mathbb N$ and $B:=A\times [k]$, with $[k]$ abridged notation of $\{1,\dots,k\}$. We assume that $g_1=1_G$.
		  
		For each support $P$ of a pattern $p \in \F_H$ and $i\in [k]$, we set $P_i:={g_i}^{-1}  P   g_i\in G$. For each generator $a \in S_H$ and $i\in [k]$, we also define $a_i:={g_i}^{-1}  a   g_i\in G$ and consider the finite set $S:=\sg{a_i \mid a\in S_H, i\in [k]}$. 
		We define a set $\F$ of forbidden patterns in $G$ with supports $P_i$ and $\sg{1_G, a_i}$ for each $a_i\in S$, using alphabet $B$, as follows:
		\begin{enumerate}
			\item \label{forbid-monochrome} for each $a\in S_H, c_1,c_2\in A, i\in [k]$ and $j\in [k]\setminus \sg{i}$ we add in $\F$ the forbidden pattern $q$ with support $\sg{1_G,a_i}$ defined by $p'(1_G):=(c_1,i)$ and $p'(a_i):=(c_2,j)$;
			\item \label{forbid-X} for each pattern $p\in \F_H$ with support $P$ and each $i\in [k]$, we add in $\F$ the pattern $p'$ with support $P_i$ defined by $p'({g_i}^{-1}qg_i):=(p(q), i)$ for each $q \in P$.
		\end{enumerate}

		We let $\Xrightlift:=X_{\F}\subseteq B^G$ denote the associated subshift, and call it the \emph{right extension} of $X$.
	\end{definition}

	The main idea behind \cref{def:rightlift} is to mirror the definition of $\Xlift$ with respect to right cosets, and to find a way to transmit constraints of $\F_H$ along each coset $H \cdot g_i$. A natural way to do this is to transmit the constraints of $\F_H$ along directions $g_i^{-1}hg_i$, with $h\in H$. This is formally done thanks to the constraints from \cref{forbid-X}.

	\begin{remark}
		The definition of right extension is not to be confused with the higher block shift defined in~\cite{CP15}. That construction is built the other way around: it starts from a subshift $Y \subseteq A^G$ on $G$ and ``collapses'' it into an SFT $X \subseteq (A^{\{g_1, \dots, g_k\}})^H$ on $H$ following right coset representatives; that is, each configuration $x$ stores in position $h$ all of $y(hg_1), \dots, y(hg_k)$.
	\end{remark}
	
    \begin{remark}
     \label{rem: pi2}
     We reuse here the notations from Definition \ref{def:rightlift} and in the remainder of this section, we will let $\pi_1: B\to A$ and $\pi_2: B\to [k]$ denote the projections on the first and second coordinates of $B$, i.e. $(c,i)=(\pi_1(c,i),\pi_2(c,i))$ for each $(c,i)\in B$.
     For any configuration $y \in \Xrightlift$, we say that $g\in G$ is \emph{$i$-colored} by $y$ if $\pi_2(y)=i$. 
     Observe that if for some configuration $y \in \Xrightlift$, some element $g\in H\cdot g_i$ is $i$-colored, then every element from $H\cdot g_i$ must also be $i$-colored in $y$.
     Indeed, if there exist $h \in H$ and $i \in [k]$ so that $\pi_2(y(h g_i))=i$, then the forbidden patterns (\ref{forbid-monochrome}) impose that for each $a\in S_H$, $\pi_2(y(hg_i a_i)) = i$. Moreover, for all $h' \in H, h' g_i a_i = (h'a) g_i$ by definition of $a_i$. So as $S_H$ is a generating set of $H$, we can prove by induction on $|h^{-1}h'|_{S_H}$ that $\pi_2(y(h' g_i))=i$ for all $h'\in H$.
    \end{remark}

	\begin{lemma}
	\label{lemma:heredity}
     Let $G$ be a finitely generated group and $H$ be a finitely generated finite index subgroup of $G$. If $X$ is a weakly but not a strongly aperiodic SFT on $H$, then $\Xrightlift$ is weakly but not strongly aperiodic on $G$. 
     In particular, if $G$ is periodically rigid, then so is $H$.
	\end{lemma}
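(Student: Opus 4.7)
The plan is to prove the two halves of the main statement separately; the final sentence about periodic rigidity then follows immediately, since a weakly but not strongly aperiodic SFT on $H$ would yield via the construction an SFT on $G$ with the same property, contradicting period-rigidity of $G$.

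For the existence of a weakly periodic configuration in $\Xrightlift$, I would pick a weakly periodic $x \in X$ (which exists because $X$ is nonempty and not strongly aperiodic) and define the natural lift $y \in B^G$ by $y(h g_i) := (x(h), i)$. Checking that $y \in \Xrightlift$ is routine: each right coset $H g_i$ has constant second coordinate $i$, and $h g_i \cdot a_i = h a g_i \in H g_i$, so patterns of type (\ref{forbid-monochrome}) cannot appear; an occurrence of a pattern of type (\ref{forbid-X}) with index $i$ forces its shift to lie in $H g_i$ (so that all second coordinates match $i$) and then reduces to an appearance of some $p \in \F_H$ in $x$, which is ruled out by $x \in X$. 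Weak periodicity of $y$ then follows by direct inspection: for any $h_0 \in \mathrm{Stab}_H(x) \setminus \sg{1_H}$, $(h_0 \cdot y)(h g_i) = (x(h_0^{-1} h), i) = (x(h), i) = y(h g_i)$.

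The core of the lemma is to show that $\Xrightlift$ admits no strongly periodic configuration. I plan to argue by contradiction: assume $y \in \Xrightlift$ has $N := \mathrm{Stab}_G(y)$ of finite index in $G$, and extract from $y$ a strongly periodic configuration of $X$. Writing $C_c := \pi_2^{-1}(c)$ for $c \in [k]$, a first step strengthening Remark~\ref{rem: pi2} is the following: rule~(\ref{forbid-monochrome}), applied in both directions (which is legitimate since $S_H$ can be assumed symmetric, so the reverse closure of color $i$ under $a_i^{-1}$ also holds), makes each $C_c$ stable under right multiplication by $K_c := g_c^{-1} H g_c$; in particular $g_0 K_c \subseteq C_c$ whenever $g_0 \in C_c$. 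Next, picking any color $c$ with $C_c \neq \emptyset$, some $g_0 \in C_c$, and defining $\phi \colon H \to A$ by $\phi(q) := \pi_1(y(g_0 g_c^{-1} q g_c))$, the key observation is that every evaluation point $g_0 g_c^{-1} q g_c$ lies in $g_0 K_c \subseteq C_c$ (because $g_c^{-1} q g_c \in K_c$ for $q \in H$). This is precisely the hypothesis required to invoke rule~(\ref{forbid-X}) with index $c$: any occurrence of $p \in \F_H$ in $\phi$ at some $h \in H$ would translate into an occurrence of the forbidden pattern from (\ref{forbid-X}) in $y$ at the shift $g_0 g_c^{-1} h g_c$, a contradiction. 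Hence $\phi \in X$.

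It remains to transfer strong periodicity from $y$ to $\phi$. A direct computation shows that $h \cdot \phi = \phi$ as soon as $g_0 g_c^{-1} h g_c g_0^{-1} \in N$, equivalently $h \in H \cap g_c g_0^{-1} N g_0 g_c^{-1}$; this last subgroup is the intersection of two finite-index subgroups of $G$ and so has finite index in $H$, making $\phi$ strongly periodic and contradicting the weak aperiodicity of $X$. The hardest step in this plan is really the extraction $\phi \in X$: it crucially uses the match between the conjugation $h \mapsto g_c^{-1} h g_c$ and the closure of $C_c$ under right multiplication by $K_c$, which is what makes rule~(\ref{forbid-X}) applicable regardless of which right cosets of $H$ happen to be ``naturally'' colored in $y$.
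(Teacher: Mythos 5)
Your proposal is correct. The first half (producing a weakly periodic configuration by lifting a weakly periodic $x\in X$ to $y(hg_i):=(x(h),i)$ and checking both families of forbidden patterns) is essentially identical to the paper's argument, including the verification that an occurrence of a type~(\ref{forbid-X}) pattern forces the shift into $Hg_i$ and hence pulls back to an occurrence of $p\in\F_H$ in $x$. Where you genuinely diverge is the weak aperiodicity half. The paper argues directly: it shifts $y$ so that $g_i$ is $i$-colored, invokes \cref{rem: pi2} to color all of $Hg_i$, defines the equivariant map $y'\mapsto x^{(y')}$ with $x^{(y')}(h)=\pi_1(y'(hg_i))$ on the $H$-orbit of $y$, and deduces that an infinite $H$-orbit of $x^{(y)}$ in $X$ forces an infinite $H$-orbit (hence $G$-orbit) of $y$. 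You instead argue by contradiction via stabilizers: assuming $\mathrm{Stab}_G(y)=N$ has finite index, you extract $\phi(q)=\pi_1(y(g_0g_c^{-1}qg_c))$ from an \emph{arbitrary} $c$-colored point $g_0$, using the stability of $C_c$ under right multiplication by $K_c=g_c^{-1}Hg_c$, and show $\mathrm{Stab}_H(\phi)\supseteq H\cap g_cg_0^{-1}Ng_0g_c^{-1}$ has finite index in $H$, contradicting weak aperiodicity of $X$. Both are valid; your version buys a little extra flexibility (no initial renormalization placing $y$ on the ``right'' coset, and the extraction works from any colored point, not just points of $Hg_c$), at the cost of a slightly heavier conjugation computation, while the paper's orbit-counting is more direct and avoids passing through stabilizers. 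One shared caveat: the closure of $C_c$ under right multiplication by all of $K_c$ (rather than just the sub-semigroup generated by the $a_c$'s) requires $S_H$ to be symmetric; you flag this explicitly as a harmless assumption, and the paper's \cref{rem: pi2} needs the same convention implicitly in its induction on $|h^{-1}h'|_{S_H}$, so this is not a gap.
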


	\begin{proof}
		With the notations from Definition \ref{def:rightlift}, let $X\subseteq A^H$ be a weakly but not strongly aperiodic SFT of $H$ on some finite alphabet $A$, with set of forbidden patterns $\F$.
		We let $Y:=\Xrightlift\subseteq B^G$ denote the right extension of $X$ and will show that $Y$ is weakly but not strongly aperiodic.

	    \paragraph*{$Y$ is weakly aperiodic}
		We let $y\in Y$ and show that the $G$-orbit of $y$ is infinite. Intuitively, \cref{rem: pi2} implies that, up to a shift, there is a coset $H \cdot g_i$ which is $i$-colored by $y$, and in which $y$ ``simulates'' $X$. In particular, this will imply that $y$ has an infinite $G$-orbit.
		
		Let $i\in [k]$ be such that $y(1_G)=(c,i)$ for some $c\in A$.
	    Then, the translated configuration $z:=g_i\cdot y\in Y$ is such that $z(g_i)=(c, i)$.
	    As $z$ is in the $G$-orbit of $y$, it is enough to prove that its $G$-orbit is infinite so without loss of generality we may assume that $y(g_i)=(c,i)$ for some $c\in A$. In particular, Remark \ref{rem: pi2} implies that $\pi_2(y(h g_i))=i$ for all $h\in H$. Hence, for every configuration $y'\in Y$ belonging to the $H$-orbit of $y$, we also have
	    $\pi_2(y'(g_i))=i$.

	    For every configuration $y'\in Y$ that belongs to the $H$-orbit of $y$, we define a configuration $x^{(y')}\in A^H$ by setting $x^{(y')}(h):=\pi_1(y'(hg_i))$ for each $h\in H$. 
	    
	    We first show that $x^{(y')}\in X$ for any $y'\in Y$ in the $H$-orbit of $y$. Let $p$ be a forbidden pattern of $\F$ with support $P \subseteq H$. Note that the existence of some $h \in H$ so that $\forall q \in P$, $x^{(y')}(hq) = p(q)$, would imply that $y'(hq g_i) = (p(q), i)$ for each $q\in P$, implying that  $y'$ contains one of the forbidden patterns (\ref{forbid-X}), and thus contradicting the fact that $y'\in Y$. Consequently, $x^{(y')}\in X$.

	    Note that for every $h,h'\in H$, 
	    \[ x^{(h\cdot y)}(h') = \pi_1(h\cdot y(h'g_i)) = \pi_1(y(h^{-1} h' g_i)) = x^{(y)}(h^{-1}h') = (h\cdot x^{(y)})(h') , \]
	    thus $x^{(h\cdot y)}=h\cdot x^{(y)} \in X$ for each $h\in H$. As $X$ is weakly aperiodic, $x^{(y)}$ has an infinite $H$-orbit, and the previous equality thus implies that $y$ also has an infinite $H$-orbit (since $x^{(y)} \neq x^{(y')}$ implies $y \neq y'$). In particular, $y$ has an infinite $G$-orbit.

	    \paragraph*{$Y$ is nonempty and not strongly aperiodic}
	    As $X$ is not strongly aperiodic, there exist $x\in X$ and $h_0\in H\setminus \sg{1_H}$ such that $h_0\cdot x=x$. 
	    We define $y\in B^G$ by setting for every $i\in [k]$ and $h\in H$, $y(h g_i):=(x(h),i)$.

	    First, we show that $y\in Y$. 
	    By definition of $y$, if $y(g) = (c_1,i)$ for some $(c_1, i)\in B$, then $g=hg_i$ for some $h \in H$. Consequently, $y(ga_i) = y(hg_i{g_i}^{-1}ag_i) = y(hag_i) = (c_2,i)$, where $c_2:=x(ha)\in A$, once again by definition of $y$. This ensures that none of the forbidden patterns (\ref{forbid-monochrome}) appear in $y$.
	    
	    Now, let $p \in \F$ with support $P \subseteq H$. If there is some $g$ so that $y(g{g_i}^-1qg_i) = (p(q), i)$ for all $q \in P$, then that means $g{g_i}^-1qg_i \in Hg_i$, and notably $g{g_i}^-1 \in H$. This implies that we can write $g=hg_i$ for some $h \in H$. As a consequence, $y(g{g_i}^-1qg_i) = y(hqg_i) = (x(hq), i)$ for all $q \in P$. This means that for all $q\in P$, $x(hq) = p(q)$, which contradicts the fact that $x \in X$. Hence, none of the forbidden patterns (\ref{forbid-X}) appear in $y$.
		As a consequence, $y \in Y$.
    
        \smallskip

	    Finally, let us show that $h_0$ is a non-trivial period of $y$. For any $h \in H, i \in [k]$, we have
        \[ h_0 \cdot y(h g_i) = y(h_0^{-1} h g_i) = (x(h_0^{-1} h), i) = (x(h), i) = y(h g_i),\]
        implying that $h_0\cdot y=y$ since all elements of $G$ can be written uniquely as a $h g_i$. We thus conclude that $X$ is not strongly aperiodic, as desired.
	\end{proof}

\section{One-relator groups with at least 3 generators}
\label{sec:one-relator}

In this section, we give a proof of \cref{thm: one-relator}.
Our proof will use the `Freiheitssatz', a seminal theorem of Magnus, which plays a central role in the theory of one-relator groups.

\begin{theorem}[Freiheitssatz \cite{Freiheitssatz}]
\label{th:freiheitssatz}
Let $G = \langle S | r \rangle$ be a one-relator group with $r$ cyclically reduced, such that $s \in S$ appears in $r$. Then the subgroup of $G$ generated by $S \setminus{\{s\}}$ is free of rank $|S|-1$.
\end{theorem}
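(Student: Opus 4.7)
The plan is to follow Magnus' classical induction on the length of the cyclically reduced relator $r$, using HNN extensions as the main structural tool and Britton's lemma as the key ingredient for controlling subgroups. For the base case $|r|=1$, the relator is a single letter of $S \sqcup S^{-1}$, and the quotient $G$ is visibly a free group of rank $|S|-1$, so the statement is immediate.

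For the inductive step, let $\sigma_x(r)$ denote the exponent sum of a generator $x$ in $r$. If there exists $t \in S \setminus \{s\}$ with $\sigma_t(r) = 0$, introduce for each $s' \in S \setminus \{t\}$ and each $n \in \mathbb{Z}$ a new symbol $s'_n$ standing for the conjugate $t^{-n} s' t^n$. Because $t$ has exponent sum zero, $r$ may be rewritten as a word $r^*$ purely in the $s'_n$'s. A Reidemeister–Schreier computation then identifies $G$ as an HNN extension with stable letter $t$ over a one-relator base group $K = \langle \{s'_n\} \mid r^* \rangle$, whose associated subgroups are free (generated by finite subfamilies $\{s'_n : n \in [a,b]\}$). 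Since $r^*$ is strictly simpler than $r$ under the right complexity measure, the inductive hypothesis applies to $K$, and Britton's lemma then lifts freeness to the subgroup of $G$ generated by $S \setminus \{s\}$.

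If instead every generator has nonzero exponent sum in $r$, pick any $t \in S \setminus \{s\}$ with $\sigma_t(r) = \alpha \neq 0$, and let $\beta := \sigma_s(r) \neq 0$. A Tietze transformation replacing one generator by an appropriate product with another (with carefully chosen exponent depending on $\alpha,\beta$) yields an equivalent one-relator presentation in which some generator now has exponent sum zero, reducing the problem to the first case.

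The main obstacle lies in the first case of the inductive step. One must (i) verify that the HNN decomposition is genuine with free associated subgroups (this is the Magnus rewriting lemma for exponent-sum-zero generators), (ii) ensure well-foundedness of the induction by fixing a complexity measure on relators for which $r^*$ is strictly simpler than $r$ — the naïve word length does not always decrease, so one typically measures complexity via the number of distinct occurrences of the disappearing generator or a lexicographic pair — and (iii) apply Britton's normal form carefully to promote freeness of the subgroup generated by $S \setminus \{s\}$ from the base group $K$ up to $G$ without picking up unintended identifications coming from pinching across the stable letter $t$. It is this bookkeeping on the HNN structure, together with the fact that the class of one-relator groups is closed under this particular construction, that makes Magnus' argument genuinely nontrivial despite the clean statement.
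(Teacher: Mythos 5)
The paper does not prove this statement at all: it is Magnus' Freiheitssatz, quoted verbatim with a citation to \cite{Freiheitssatz} and used as a black box in Lemma \ref{lemma:zero-aperiodic}. So there is no in-paper argument to compare yours against; the only question is whether your proposal stands on its own. It does not, yet. What you have written is an accurate roadmap of the standard Magnus--Moldavansky/McCool--Schupp proof via HNN extensions and Britton's lemma, but the three items you list as ``the main obstacle'' at the end are not peripheral bookkeeping --- they \emph{are} the theorem. In particular: (i) your base group $K=\langle \{s'_n\}\mid r^*\rangle$ is taken over \emph{all} integer subscripts, in which case the shift is an automorphism of $K$ and both associated subgroups are all of $K$; to get an HNN decomposition with \emph{free} (Magnus) associated subgroups you must restrict the subscripts of the generators appearing in $r^*$ to the finite window they actually occupy, and this is exactly where the inductive hypothesis is invoked --- you assert it rather than set it up. (ii) The well-foundedness of the induction is waved at (``one typically measures complexity via\dots'') without fixing a measure and verifying it decreases through the case-2 Tietze transformation followed by the rewriting; this is a known trap and cannot be left as a remark. (iii) The target subgroup $\langle S\setminus\{s\}\rangle$ contains the stable letter $t$ whenever $t\neq s$, so Britton's lemma must be used to show it decomposes as a free product of $\langle t\rangle$ with a Magnus subgroup of the base, and the latter is free only after checking that the relevant subscripted copies of $s$ still occur in $r^*$ so that the inductive Freiheitssatz applies to $K$. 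None of these steps would \emph{fail} --- the strategy is the correct classical one --- but as written the inductive step is a statement of intent, not an argument.

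You also omit the degenerate subcase where the chosen $t\in S\setminus\{s\}$ with zero exponent sum does not occur in $r$ at all; then $G$ splits as a free product $\langle S\setminus\{t\}\mid r\rangle * \langle t\rangle$ and the rewriting step is vacuous, so this case needs a separate (easy) treatment via induction on $|S|$. Since the paper legitimately uses the Freiheitssatz as a cited classical result, the practical recommendation is either to keep it as a citation, or, if you want a self-contained proof, to commit to a precise complexity measure (e.g.\ the lexicographic pair consisting of the length of $r$ after deleting occurrences of the distinguished generator, then the length of $r$) and write out the Britton's lemma argument for the Magnus subgroup containing the stable letter.
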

	
In the remainder of the section, if $G$ is a group with a set of generators $S$, for every $c\in S\cup S^{-1}$ and every word $w\in (S\cup S^{-1})^{\ast}$, we denote with $\#_cw \in \mathbb N$ the number of occurences of $c$ in $w$. We moreover set $|w|_c:= \#_cw-\#_{c^{-1}}w$, so that $|w|_c\in \Z$.
	
	Our proof of \cref{thm: one-relator} is based on the following three lemmas:

	\begin{lemma}
		\label{lemma:zero-aperiodic}
	  Let $G = \langle S | r \rangle$ with $|S|\geq 3$, $S=\{a,b,c,\hdots\}$, and $r$ cyclically reduced. Suppose that $c$ appears in $r$ and $|r|_c = 0$. Then $G$ is not periodically rigid.
	\end{lemma}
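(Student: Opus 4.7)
The plan is to exhibit a weakly aperiodic SFT on $G$ that fails to be strongly aperiodic, obtained as a free extension from a well-chosen free subgroup. Set $H := \langle S \setminus \sg{c}\rangle \leq G$. Since $c$ appears in the cyclically reduced relator $r$, the Freiheitssatz (\cref{th:freiheitssatz}) ensures that $H$ is free of rank $|S|-1 \geq 2$. By \cref{th:piantadosi}, $H$ is not periodically rigid, and in particular admits some weakly aperiodic SFT $X \subseteq A^H$. Let $\Xlift \subseteq A^G$ denote its free extension to $G$; by \cref{th:jeandel}, $\Xlift$ is a weakly aperiodic SFT on $G$.

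It remains to show that $\Xlift$ is \emph{not} strongly aperiodic, and for this I would check the hypothesis of \cref{th:barbieri} with the natural choice $g := c$. The assumption $|r|_c = 0$ is precisely what is needed to define a group homomorphism $\phi : G \to \Z$ sending $c \mapsto 1$ and every other element of $S$ to $0$: the relator $r$ is then sent to $|r|_c = 0$, so the assignment descends from $\F(S)$ to $G$. In particular, $\phi(c) = 1$ already witnesses that $c \neq 1_G$. Moreover every generator of $H$ lies in $\ker \phi$, so $H \subseteq \ker \phi$, and for any $\gamma \in G$ and any $n > 0$ one computes
\[\phi(\gamma c^n \gamma^{-1}) = \phi(\gamma) + n - \phi(\gamma) = n \neq 0,\]
hence $\gamma c^n \gamma^{-1} \notin \ker \phi$, and in particular $\gamma c^n \gamma^{-1} \notin H \setminus \sg{1}$. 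The hypothesis of \cref{th:barbieri} is thus satisfied with $g = c$, which yields that $\Xlift$ is not strongly aperiodic on $G$. Combined with its weak aperiodicity, this shows that $G$ is not periodically rigid.

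Given the available machinery, the proof is essentially mechanical: the only genuinely new ingredient is identifying the correct witness for Barbieri's criterion. The hypothesis $|r|_c = 0$ should be read exactly as saying that $c$ is independent from $H$ under a natural surjection $G \twoheadrightarrow \Z$, which is precisely what prevents any positive power of $c$, or any conjugate thereof, from landing in the free subgroup $H$, and hence what obstructs the free extension $\Xlift$ from becoming strongly aperiodic.
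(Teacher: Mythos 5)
Your proof is correct and follows essentially the same route as the paper: Freiheitssatz to get a free subgroup $H$ avoiding $c$, Piantadosi plus the free extension for weak aperiodicity, and then Barbieri's criterion with $g=c$, verified via the homomorphism $G\to\Z$ counting the signed number of occurrences of $c$ (which is exactly the paper's map $|\cdot|_c$, well defined precisely because $|r|_c=0$). The only cosmetic difference is that the paper takes $H=\langle a,b\rangle$ of rank $2$ rather than all of $\langle S\setminus\sg{c}\rangle$, which changes nothing.
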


	\begin{proof}
		By \cref{th:freiheitssatz}, the subgroup $H:=\langle a,b \rangle$ of $G$ is a free group of rank $2$. Thus by \cref{th:piantadosi}, $H$ admits a weakly aperiodic SFT $X$. By \cref{th:jeandel}, $\Xlift$ is therefore a weakly aperiodic SFT on $G$.
		
		Now, our aim is to prove that $\Xlift$ is not strongly aperiodic using \cref{th:barbieri}. Note that for every $g\in G$, and every word $w$ in $(S \cup S^{-1})^*$ that represents $g$, the value $|w|_c$ does not depend on the choice of the word $w$ representing $g$. Indeed,
		if $w,w'$ are two words representing the element $g$, then there is a (finite) sequence of operations allowing to transform $w$ into $w'$, where at each step, 
		we either remove or add some factor of the form $ss^{-1}$ or $s^{-1}s$ for some $s\in S$, or some factor which is a copy of the relator $r$.
        Note that none of these operations change the value $|w|_c$. Alternatively, this means that $|.|_c: G\to \Z$ defines a group homomorphism. 
		
		Now observe that for every $\gamma\in G, n>0$, we have $|\gamma c^n \gamma^{-1}|_c = |c^n|_c = n > 0$, thus $\gamma c^n \gamma^{-1}\notin\langle a,b \rangle$. As a consequence, using~\cref{th:barbieri}, we conclude that $\Xlift$ is not strongly aperiodic.
	\end{proof}

	\begin{lemma}
		\label{lemma:infiniteends-aperiodic}
		Let $G$ be a finitely generated group with infinitely many ends. Then $G$ is not periodically rigid.
	\end{lemma}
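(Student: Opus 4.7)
The plan is to combine Cohen's \cref{th:cohen} with the free extension construction in a completely off-the-shelf way. Since $G$ has infinitely many ends, \cref{th:cohen} already asserts that $G$ admits \emph{no} strongly aperiodic SFT. So it suffices to exhibit a single nonempty SFT on $G$ that is weakly aperiodic: such an SFT will automatically fail to be strongly aperiodic, witnessing that $G$ is not periodically rigid.

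To produce such an SFT, I would first invoke Stallings' theorem on ends of groups: any finitely generated group with infinitely many ends splits nontrivially over a finite subgroup, and it is a classical consequence of this (via Bass--Serre theory, or of the theory of groups acting on trees) that $G$ contains a non-abelian free subgroup $F \leq G$ of rank $2$. By Piantadosi's \cref{th:piantadosi}, $F$ admits a weakly aperiodic SFT $X$. I would then take the free extension $\Xlift \subseteq A^G$ defined in \cref{def:lift}, which is an SFT on $G$, and apply \cref{th:jeandel} to conclude that $\Xlift$ is weakly aperiodic on $G$.

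Finally, because $G$ has infinitely many ends, \cref{th:cohen} prevents $\Xlift$ from being strongly aperiodic. Thus $\Xlift$ is a weakly but not strongly aperiodic SFT on $G$, which by definition means $G$ is not periodically rigid.

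The only nontrivial input is the existence of a rank-$2$ free subgroup in a group with infinitely many ends; I would briefly justify this by citing Stallings' theorem (and noting that a free product $A \ast B$ with $|A|\geq 2, |B|\geq 2$ or an HNN-extension yielding infinitely many ends always contains $F_2$). Everything else is a direct combination of results already stated in \cref{sec:def}, so there is essentially no additional combinatorial work. If one wanted to avoid explicit appeal to Stallings' theorem, an alternative route would be to observe that infinitely-ended groups are non-amenable and to invoke the existence of a weakly aperiodic SFT on non-amenable groups from \cite{JeandelTranslationlike} directly, but the argument above is more self-contained with respect to the tools already introduced in the paper.
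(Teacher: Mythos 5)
Your proposal is correct and follows essentially the same route as the paper: find a rank-$2$ free subgroup of $G$, lift Piantadosi's weakly aperiodic SFT via the free extension and \cref{th:jeandel}, and conclude non-strong-aperiodicity from \cref{th:cohen}. The paper simply cites a reference for the existence of the free subgroup where you sketch the Stallings/Bass--Serre justification.
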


	\begin{proof}
        As $G$ has infinitely many ends, it has a subgroup $H$ isomorphic to the free group of rank $2$ (see for example \cite[Corollary 1.3]{AMO07}). Hence, by \cref{th:piantadosi}, $H$ admits a        
		weakly aperiodic SFT $X$, and by \cref{th:jeandel}, $\Xlift$ a weakly aperiodic SFT of $G$. Moreover, note that \cref{th:cohen} implies that $\Xlift$ cannot be strongly aperiodic.
	\end{proof}

	The following lemma is inspired by what is called the Magnus-Moldavansky rewriting process, a technique to write one-relator groups as a tower of HNN extensions on one-relator groups. Part of that technique implies the ``rewriting'' of a one-relator group as another one that satisfies the condition of \cref{lemma:zero-aperiodic}, that is: some generator appears in the relator with total exponent $0$.
	We cut out from this process the groups with infinitely many ends, as they already fall within the bounds of \cref{lemma:infiniteends-aperiodic}.

	\begin{lemma}
		\label{lemma:moldavansky}
		Let $G = \langle S | r \rangle$ with $|S|\geq 3$ 
		and $r$ cyclically reduced. 
		
		Then either $G$ has infinitely many ends, or there exist some presentation $\langle T \mid r' \rangle$ of $G$ and some element $t\in T$ such that:
		\begin{itemize}
			\item $|T| = |S|$
			\item $r'$ is cyclically reduced
			\item $t$ occurs in $r'$
			\item $|r'|_t = 0$
			\item $G \cong  \langle T \mid r'\rangle$
		\end{itemize} 
	\end{lemma}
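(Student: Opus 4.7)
The plan is to transform the exponent vector $(|r|_s)_{s\in S}$ into the normal form $(d,0,\dots,0)$, where $d$ is the gcd, via Nielsen-type Tietze transformations on the generating set. Either this already supplies the generator we need, or the relator collapses to a power of a single generator and $G$ has infinitely many ends.

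First I would dispose of the easy case: if some generator $a\in S$ occurs in $r$ with $|r|_a=0$, take $T:=S$, $r':=r$, $t:=a$. So assume from now on that every generator appearing in $r$ has nonzero exponent sum, and set
\[d:=\gcd\{|r|_s : s\in S\}\ge 1.\]
Next, I would argue that any composition of elementary column operations on the row $(|r|_s)_{s\in S}$ (addition of an integer multiple of one coordinate to another, sign flip, swap) can be realised as a Tietze transformation on $\langle S\mid r\rangle$: e.g.\ the substitution $b\mapsto b':=ba^{-k}$ is performed by first adding a generator $b'$ with defining relation $b'\,a^k b^{-1}$ and then removing $b$ via that relation, which amounts to replacing every occurrence of $b$ in $r$ by $b'a^{k}$. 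Since these operations correspond to elements of $\mathrm{GL}_n(\mathbb{Z})$, the Smith normal form of a row vector guarantees a sequence reducing $(|r|_s)_{s\in S}$ to $(d,0,\dots,0)$. After each substitution I would cyclically reduce the relator (cyclic reduction leaves each exponent sum unchanged). The result is a presentation $\langle T\mid r'\rangle$ of $G$ with $T=\{t_1,\dots,t_n\}$, $|T|=|S|=n$, $r'$ cyclically reduced, $|r'|_{t_1}=d$, and $|r'|_{t_i}=0$ for all $i\ge 2$. Note that $r'\neq 1$ in $F(T)$ because Nielsen transformations are automorphisms of the ambient free group and $r\neq 1$ in $F(S)$.

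Now I would apply the key dichotomy. \emph{Either} some $t_i$ with $i\ge 2$ occurs in the cyclically reduced $r'$, in which case setting $t:=t_i$ yields all five conclusions of the lemma. \emph{Or} no such $t_i$ occurs, so $r'\in \langle t_1\rangle$; combined with $|r'|_{t_1}=d$ and cyclic reducedness, this forces $r'=t_1^{\pm d}$. In the latter case
\[G\;\cong\;\langle t_1\mid t_1^d\rangle \ast F(t_2,\dots,t_n)\;\cong\;(\mathbb{Z}/d\mathbb{Z})\ast F_{n-1},\]
with $n-1\ge 2$. If $d=1$ this is just $F_{n-1}$, a non-abelian free group, hence has infinitely many ends. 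If $d\ge 2$ it is a nontrivial free product over the trivial (finite) subgroup, so by Stallings' theorem has more than one end, and it cannot have exactly two ends because it contains a free subgroup of rank $\ge 2$ and is therefore not virtually $\mathbb{Z}$. Either way, $G$ has infinitely many ends, which is the first alternative of the lemma.

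The main obstacle is purely bookkeeping: making sure the Nielsen transformations on $F(S)$ really are valid Tietze transformations on the one-relator presentation and tracking how cyclic reduction interacts with the exponent sums and with which generators appear in the relator. The genuine idea is the clean dichotomy above, which is exactly the point where the ``Magnus--Moldavansky rewriting process'' either succeeds in producing a new relator with a generator of exponent sum zero, or reveals that the original group was already a free product collapsing onto a proper subset of the generators, hence infinitely ended thanks to the hypothesis $|S|\ge 3$.
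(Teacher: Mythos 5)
Your proof is correct and follows essentially the same route as the paper's: both realize the Euclidean algorithm on the exponent-sum vector via Nielsen/Tietze substitutions of the form $b\mapsto ba^{\pm k}$, and both fall back on a free-product decomposition (hence infinitely many ends, using $|S|\ge 3$) when a generator drops out of the cyclically reduced relator. The only difference is organizational: the paper iterates a single step $t_1:=s_1s_n$ and checks for disappearing generators at each stage, whereas you invoke the Smith normal form to reach $(d,0,\dots,0)$ in one pass and perform the dichotomy once at the end, which is a slightly cleaner packaging of the same argument.
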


	\begin{proof}
        First, assume that some element $s$ of $S$ does not occur in $r$, then we write $G$ as $G^\prime \ast \langle s \rangle$ with $G^\prime = \langle S\backslash \{s\} | r \rangle$. 
        Then, as $|S\backslash \{s\}|\geq 2$, and $G^\prime$ has one relator, we have $G^\prime \neq \{1\}$, implying that $G$ has infinitely many ends (see for example \cite[Theorem 1.8]{freeproduct}).
		
		Now, suppose all elements of $S$ occur at least once in $r$. If some element $s\in S$ satisfies $|r|_s=0$, then we conclude by choosing $T:=S$, so we may assume that there is no such element. Up to changing the generating set by replacing elements of $S$ with their formal inverse, we also assume that for each $s\in S$, $|r|_s>0$, and write $S = \{ s_1, \dots, s_n \}$ so that
		\[0 < |r|_{s_1} \leq |r|_{s_2} \leq \dots \leq |r|_{s_n}.\]

		We set $t_1 := s_1s_n$ and $t_i := s_i$ for every $i > 1$. We now let $r'$ be the word obtained after rewriting $r$ using the family $T:=\sg{t_1,\ldots, t_n}$, i.e., after replacing each occurence of $s_i$ (respectively of ${s_i}^{-1}$) in $r$ with $t_i$ (respectively with ${t_i}^{-1}$) for $i>1$, and after replacing each occurence of $s_1$ (respectively of ${s_1}^{-1}$) with $t_1{t_n}^{-1}$ (respectively with $t_n{t_1}^{-1}$). 
		Note that $\langle T \mid r' \rangle$ is obtained from 
		$\langle S\mid r\rangle$ after applying the following Tietze transformations:
		
		\begin{itemize}
		 \item[1] Add new generators $t_1, \ldots, t_n$ together with the relations $t_1 {s_n}^{-1} {s_1}^{-1}$ and $t_i {s_i}^{-1}$ for each $i>1$;
		 \item[2] Remove the letter $s_1$ and replace its occurences with $t_1 {s_n}^{-1}$;
		 \item[3] Remove the letter $s_i$ for each $i>1$ and replace its occurences with $t_i$;
		 \item[4] Remove the redundant relations $t_1 {t_n}^{-1} t_n {t_1}^{-1}$ and $t_i {t_i}^{-1}$ for each $i>1$.
		\end{itemize}

		Hence $\langle T \mid r' \rangle$ is also a presentation of $G$. Moreover, note that

		\[
		0 < |r'|_{t_n} = |r|_{s_n} - |r|_{s_1} < |r|_{s_n},
		\]

		while for any $i>1$, $|r'|_{t_i} = |r|_{s_i}$. If the cyclically reduced form of $r'$ does not contain all $t_i$'s, then $G$ has infinitely many ends. 
		Iterating this process a finite number of times, we either prove $G$ has infinitely many ends, or that we obtain a presentation $\langle T \mid r'\rangle$ such that some $r'$ is a reduced nonempty word and that some element $t\in T$ occurring in $r'$ satisfies $|r'|_t=0$. In particular, $\langle T \mid r'\rangle$ satisfies the desired properties.
	\end{proof}

	We now have all that we need to prove \cref{thm: one-relator}.

	\begin{proof}[Proof of \cref{thm: one-relator}]
		Let $G$ be a one-relator group with at least three generators. Then by \cref{lemma:moldavansky}, either $G$ has an infinite number of ends, and consequently it has a weakly but not strongly aperiodic SFT by \cref{lemma:infiniteends-aperiodic}; or its presentation can be rewritten using \cref{lemma:moldavansky}, and we can apply  \cref{lemma:zero-aperiodic}, which yields the same result.
	\end{proof}

\section{Quasi-planar groups}
\label{sec:quasi-planar}

	In this last section, we give a proof of \cref{cor: QP}.
	It mainly follows from a combination of \cref{lemma:heredity} and \cref{thm: one-relator} together with the following result of MacManus, which characterizes quasi-planar groups.

	\begin{theorem}[\cite{Mac23}, Corollary D]
	\label{th:macmanus}
	A finitely generated group $G$ is quasi-planar, that is it admits a Cayley graph quasi-isometric to a planar graph, if and only if $G$ is virtually a free product of finitely many free groups and surface groups.
	\end{theorem}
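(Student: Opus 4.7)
The plan is to prove both directions of the equivalence, treating the ``virtually a free product of free groups and surface groups $\Rightarrow$ quasi-planar'' direction as essentially constructive, and devoting most of the effort to the converse, which requires a decomposition theorem combined with a quasi-isometry rigidity argument.

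For the easy direction, I would proceed as follows. Finitely generated free groups have trees as their standard Cayley graphs, which are planar. For a surface group $G_g$ with $g\geq 1$, the standard presentation $\langle a_1,b_1,\ldots,a_g,b_g\mid [a_1,b_1]\cdots[a_g,b_g]\rangle$ yields a Cayley graph that embeds in the universal cover of the corresponding closed surface, which is either the Euclidean plane (if $g=1$) or the hyperbolic plane (if $g\geq 2$), both of which are planar. For free products $A\ast B$, one can build a planar Cayley graph by taking the Bass–Serre tree of the free product and attaching a planar Cayley graph of $A$ or $B$ at each vertex, in a coherent planar embedding along the tree structure. Since quasi-isometry to a planar graph is a quasi-isometry invariant, and since finite-index subgroups and overgroups of finite index are quasi-isometric to each other, being virtually a free product of free and surface groups implies quasi-planarity.

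For the hard direction, the strategy is based on two classical inputs. First, Stallings' theorem and Dunwoody's accessibility give a decomposition of any finitely generated quasi-planar group $G$ as the fundamental group of a finite graph of groups with finite edge groups, whose vertex groups are finite, two-ended, or one-ended. Since quasi-planarity is preserved under passing to vertex groups in such a splitting (up to quasi-isometry, the vertex groups sit inside the Bass–Serre tree of spaces and inherit planarity from the ambient planar structure), each vertex group is itself quasi-planar. Finite and two-ended groups are virtually trivial or virtually $\mathbb Z$ and pose no difficulty; the core of the proof concerns the one-ended case.

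The main obstacle is showing that a one-ended finitely generated quasi-planar group $H$ is virtually a closed surface group. The rough scheme is to use the quasi-isometry to a planar graph to produce a geometric action of $H$ on a topological plane (essentially the Freudenthal compactification/``boundary at infinity'' of the planar structure). If $H$ has polynomial growth, Gromov's theorem forces it to be virtually $\mathbb Z^2$, hence the fundamental group of the torus. Otherwise, one shows that $H$ is Gromov-hyperbolic with planar boundary, invokes results on quasi-isometric rigidity of the hyperbolic plane (Tukia, Gabai, Casson–Jungreis) to realize $H$ as a convergence group on $S^1$, and concludes that $H$ is virtually Fuchsian, hence virtually a closed surface group. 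Combining the vertex group analyses through the graph-of-groups decomposition and passing to a finite-index subgroup that kills the finite edge and vertex groups (using that finite extensions and finite-kernel quotients of free products of free and surface groups remain in the class up to commensurability) yields the desired structural description, and this passage from a graph-of-groups splitting over finite edge groups to an honest free product up to finite index is where the most delicate combinatorial bookkeeping — and the genuine content absorbed from MacManus' work — lies.
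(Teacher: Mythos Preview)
The paper does not prove this theorem at all: it is quoted verbatim as an external result (MacManus~\cite{Mac23}, Corollary~D) and used as a black box in the proof of Corollary~\ref{cor: QP}. There is therefore nothing in the paper to compare your argument against. Your sketch is a plausible high-level outline of the kind of argument behind MacManus' result (Dunwoody accessibility to reduce to the one-ended case, then a quasi-isometric rigidity analysis to force one-ended quasi-planar groups to be virtually surface groups), but none of this appears in the present paper, and for the purposes of this paper you should simply cite the theorem rather than attempt to reprove it.
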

	
	\QP*

 \begin{proof}
    First, it is known that virtually cyclic and torsion-free virtually $\Z^2$ groups are periodically rigid (see notably \cite[Prop. 6.4]{Bitar24} for the inheritance of period-ridigity from $\Z^2$ to any torsion-free virtually $\Z^2$ group). We will prove that the opposite implication holds on quasi-planar groups.
 
     Let $G$ be a quasi-planar group. By Theorem \ref{th:macmanus}, $G$ is virtually a free product of a finite number of groups $G_1, \ldots, G_m$, where for each $i\in [m]$, $G_i$ is either a free group or a surface group. If all $G_i$'s are free groups, then $G$ is a virtually free group, and it is thus either virtually cyclic, or it has an infinite number of ends and admits $\mathbb F_2$ as a subgroup, in which case Theorems \ref{th:piantadosi}, \ref{th:cohen} and \ref{th:jeandel} imply that $G$ is not periodically rigid.  
      
      Assume now without loss of generality that $G_1$ is a surface group. If $m=1$, then either $G_1=\mathbb Z^2$ and $G$ is periodically rigid; or $G_1$ is a surface group of genus at least $2$, in which case Theorem \ref{thm: one-relator} implies that it is not periodically rigid, and then by Lemma \ref{lemma:heredity} $G$ is also not periodically rigid. If $m\geq 2$, then by \cite[Proposition 3]{Serre}, $G$ contains $\mathbb F_2$ as a subgroup. In particular, Theorems \ref{th:piantadosi} and \ref{th:jeandel} imply that $G$ admits a weakly aperiodic SFT $X$. We claim moreover that $G$ has an infinite number of ends (see for example \cite[Theorem 1.8]{freeproduct}), hence Theorem \ref{th:cohen} implies that $X$ is not strongly aperiodic, so $G$ is not periodically rigid. 
  \end{proof}

\section{Further questions and remarks}

The first natural question left open by this paper is Conjecture  \ref{conj: Bitar}. The next step would be to extend Theorem \ref{thm: one-relator} to all one-relator groups. As it seems, two-generator one-relator groups form a specific subclass of one-relator groups, on which Conjecture \ref{conj: strongly} remains open. We make here a few observations about special cases for which we can still give a positive answer to this question.

Recall that Conjecture \ref{conj: Bitar} was proved on Baumslag-Solitar groups $\mathrm{BS}(m,n)$ \cite{ME22}.
A finitely generated group is \emph{free-by-cyclic} if it can be expressed as a semidirect product of the form $\mathbb{F}_k\rtimes \Z$ for some $k\in \mathbb N$. Observe that the tools we used here still apply to show that (virtually) free-by-cyclic groups satisfy Conjecture \ref{conj: Bitar}. Let $G$ be a free-by-cyclic group with a decomposition $\mathbb{F}_k\rtimes \Z$ for some $k\in \mathbb N$. If $k=0$, $G$ is cyclic, and if $k=1$, $G$ is either isomorphic to $\mathbb Z^2$, or to the fundamental group $\langle a,b\mid abab^{-1}\rangle$ of the Klein bottle, which also corresponds to the Baumslag Solitar group $\mathrm{BS}(1,-1)$, and is a torsion-free group containing a subgroup of index $2$ isomorphic to $\Z^2$. Now, let us assume $k\geq 2$, and consider the weakly aperiodic SFT $X$ on $\mathbb F_k$ given by Theorem \ref{th:piantadosi}. By Proposition \ref{th:jeandel}, $\Xlift$ is a weakly aperiodic SFT on $G$. We claim that moreover, $\Xlift$ is not strongly aperiodic. To see this, we let $H$ denote a normal subgroup of $G$ isomorphic to $\mathbb F_k$ such that $G/H\simeq \Z$. In particular, there exists some element $a\in G-H$ such that the cosets of $H$ correspond exactly to the sets $a^iH$, for $i\in \Z$. 
For each $x\in X$, we let $x^{\uparrow}$ be the configuration of $G$ defined as follows. For every $g\in G$, we let $i\in \Z, h\in \mathbb F_k$ be the unique elements such that $g=a^ih$, and set $x^{\uparrow}(g):=x(h)$. Note that for every $x\in X$, we must have $x^{\uparrow}\in \Xlift$. Moreover, observe that we also have $a\cdot x^{\uparrow}=x^{\uparrow}$, implying that $\Xlift$ is not a strongly aperiodic SFT on $G$. Combining all of these observations with Proposition \ref{prop: finite-index}, we conclude that every virtually free-by-cyclic group satisfies Conjecture \ref{conj: Bitar}. 

Virtually free-by-cyclic groups occupy an important place in the class of one-relator groups. Kielak and Linton \cite{Kielak_Linton} proved that all finitely generated one-relator group with torsion are virtually free-by-cyclic, and, based on a result of Brown \cite{Brown}, Dunfield and Thurston \cite{Dunfield_Thurston} proved that some positive fraction of the groups admitting a presentation with two generators and one relator are virtually free-by-cyclic.

Eventually, observe that given some finitely generated groups $G,H$ such that $H$ is a subgroup of finite index in $G$, the constructions $\Xlift$ and $\Xrightlift$ behave quite differently in general. 
It is not even clear for us if the two constructions are comparable to each other when $H$ is a normal subgroup of $G$  (e.g.  does there exist morphisms between these subshifts?).

\section*{Acknowledgements}
	The authors would like to thank Nicolás Bitar for mentioning~\cite{Cohen-GS-Rieck}, itself mentioning weak-but-not-strong aperiodicity of hyperbolic groups being obtained through results by Coornaert and  Papadopoulos~\cite{Coornaert-Papadopoulos}. The first author would like to thank Nathalie Aubrun for her help in understanding one-relator groups, and particularly the Magnus-Moldavansky rewriting. The second author would like to thank Louis Esperet for his comments on a previous version of the paper, which was included in his PhD thesis manuscript.

 \bibliographystyle{abbrv}
 \bibliography{biblio}

\end{document}